\def\qed@warning{}
\newlist{thmenum}{enumerate}{1}
\setlist[thmenum]{label=(\alph*)}
\crefname{equation}{equation}{equations}
\Crefname{equation}{Equation}{Equations}
\newcommand{\set}[1]{\left\{ \def\given{\ \middle| \ }  #1 \right\}  }
\newcommand{\defeq}{\mathrel{:=}}
\DeclareMathOperator{\tr}{tr}
\newcommand{\ii}{\mathsf{i}}
\newcommand{\transp}{\mathrm{T}}
\def\leftfun#1{\mathopen{}\left#1}
\def\rightfun#1{\right#1}
\NewDocumentCommand{\slg}{}{\operatorname{SL}_2(\mathbb{C})}
\NewDocumentCommand{\pslg}{}{\operatorname{PSL}_2(\mathbb{C})}
\NewDocumentCommand{\glg}{}{\operatorname{GL}_2(\mathbb{C})}
\NewDocumentCommand{\psp}{O{\mathbb{C}} O{1}}{{#1 P}^{#2}}
\NewDocumentCommand{\upf}{m}{\varphi^+\leftfun(#1\rightfun)}
\NewDocumentCommand{\dnf}{m}{\varphi^-\leftfun(#1\rightfun)}
\NewDocumentCommand{\aroundhol}{m}{\tau\leftfun(#1\rightfun)}
\NewDocumentCommand{\upr}{m}{#1^{\uparrow}}
\NewDocumentCommand{\dnr}{m}{#1^{\downarrow}}
\NewDocumentCommand{\gpoid}{m}{\Pi\leftfun(#1\rightfun)}
\NewDocumentCommand{\Hol}{m}{\operatorname{Hol}_{#1}}
\NewDocumentCommand{\hol}{m}{\operatorname{hol}_{#1}}
\NewDocumentCommand{\evec}{m}{\mathrm{\mathbf{e}}_{#1}}
\DeclareMathOperator{\dil}{Li_2}
\DeclareMathOperator{\logdil}{L}
\DeclareExpandableDocumentCommand{\IfEmptyTF}{mmm}
 {
    \tl_if_empty:nTF {#1} {#2} {#3}
 }
\NewDocumentCommand{\potl}{o m}{
  \Phi_{\IfNoValueTF{#1}{}{#1}}%
  \IfEmptyTF{#2}{%
  }{%
    \leftfun(#2\rightfun)
  }
}
\declaretheorem[style=bigtheorem,title=Theorem,refname={Theorem, Theorems}]{bigtheorem}
\declaretheorem[style=theorem]{proposition}
\declaretheorem[style=theorem,sibling=proposition]{theorem}
\declaretheorem[style=theorem,sibling=proposition]{lemma}
\declaretheorem[style=theorem,sibling=proposition]{conjecture}
\declaretheorem[style=theorem,sibling=proposition]{corollary}
\declaretheorem[style=definition,sibling=proposition]{definition}
\declaretheorem[style=definition,sibling=proposition]{example}
\title{Octahedral coordinates from the Wirtinger presentation}
\author{Calvin McPhail-Snyder}
\address{Duke University}
\email{calvin@sl2.site}
\subjclass{Primary 57K32, secondary 57M05}
\keywords{octahedral decomposition, hyperbolic potential function, decorated hyperbolic structure, quandle}
\begin{document}

\begin{abstract}
  Let \(\rho\) be a representation of a knot group (or more generally, the fundamental group of a tangle complement) into \(\slg\) expressed in terms of the Wirtinger generators of a diagram \(D\).
  This diagram also determines an ideal triangulation of the complement called the octahedral decomposition.
  \(\rho\) induces a hyperbolic structure on the complement of \(D\), and in this note we give a direct algebraic formula for the geometric parameters of the octahedral decomposition induced by this structure.
  Our formula gives a new, explicit criterion for whether \(\rho\) occurs as a critical point of the diagram's Neumann-Zagier--Yokota potential function.
\end{abstract}

\maketitle

\tableofcontents

\section{Introduction}

Studying representations of knot and \(3\)-manifold groups into \(\slg\) is important in low-dimensional topology.
The isometry group of hyperbolic \(3\)-space is \(\pslg\), so this problem is closely related to the study of hyperbolic structures \cite{ThurstonNotes}.
While one can study representations algebraically, say via the Wirtinger presentation of the knot group, it is sometimes better to work more geometrically.
For example, for geometric invariants like the hyperbolic volume it is usually better to work with the geometric parameters of an ideal triangulation of the knot complement than directly with \(\rho\).

In this paper we are concerned with a particular family of ideal triangulations adapted to knot diagrams.
Given a diagram \(D\) of a knot \(K\) the \defemph{octahedral decomposition} decomposes \(S^3 \setminus K\) minus two points into a union of octahedra, with one for each crossing of the diagram.
By further subdividing into tetrahedra one can obtain an ideal triangulation of a standard form.
This works just as well for links and tangles.
The geometric data of the tetrahedra are naturally described by \defemph{octahedral coordinates}; these are implicit in work of \citeauthor{Yokota2000} \cite{Yokota2000} and have been studied systematically by \citeauthor{Kim2016} \cite{Kim2016,Kim2018}.
When studying geometric properties of a representation the octahedral coordinates are quite useful; for example, they enable a direct computation of the hyperbolic volume and Chern-Simons invariant \cite{Cho2013}.
Our version of these coordinates is motivated by connections to the representation theory of quantum groups \cite{McPhailSnyder2022,McPhailSnyderAlgebra}.

While the octahedral coordinates are geometrically natural they can be difficult to solve for in practice.
Instead we might find representations of the knot group by some other method and then try to find the corresponding octahedral coordinates.
Given a representation \(\rho : \pi_1(S^3 \setminus K) \to \slg\) described by its values on Wirtinger generators one can use the methods of \citeauthor{Blanchet2018} \cite{Blanchet2018} to find octahedral coordinates corresponding to \(\rho\).
This gives a somewhat complicated inductive method that does not usually produce simple formulas.
In addition, some representations do not come from octahedral coordinates (one might have to conjugate first) and there is not a simple way to check when this occurs.

In this paper we give an elementary method (\cref{def:associated-octahedral,thm:holonomies-match}) for determining the octahedral coordinates of a tangle diagram directly from \(\rho\).
As an application we give an explicit criterion (\cref{thm:detection-condition}) for whether a representation corresponds to a smooth critical point of the potential function of a knot diagram \(D\);
this question is relevant to the saddle-point method used in most approaches to the Volume Conjecture.
Our characterization depends only on \(\rho\) and \(D\), not the details of the triangulation.

The Volume Conjecture proposes that the asymptotics of the Kashaev invariants of a hyperbolic knot recover geometric information like the hyperbolic volume \cite{Kashaev1997}.
The Kashaev invariants are naturally associated with the octahedral decomposition, and most proofs of special cases of the conjecture proceed via studying an associated \defemph{potential function} \cite{Yokota2000}.
The potential function is defined on a space parametrizing the geometry of the ideal tetrahedra: it has singularities where they are degenerate and critical points at nondegenerate solutions of the gluing equations of the triangulation.
To apply the saddle-point method to the asymptotics of the Kashaev invariant it is important to use a diagram whose potential function has a smooth critical point at the complete hyperbolic structure.
This question has been previously studied using other techniques \cite{Sakuma2018,Garoufalidis2016} and it is known that every hyperbolic knot that is alternating or has at most 12 crossings has such a diagram.
We show in \cref{thm:arc-faithful} that for knot diagrams this condition is equivalent to being  \defemph{arc-faithful}, meaning that the Wirtinger generators of the over and under arcs at each crossing are always distinct.
This new characterization may prove useful for showing that the potential function has smooth critical points.

Quantum topology provides another motivation for our results.
One can define \cite{McPhailSnyderThesis,McPhailSnyderVolume} a geometrically twisted version of the Kashaev invariant that can also be understood as a quantization of the \(\slg\) Chern-Simons invariant, which includes the hyperbolic volume.
The construction of this invariant uses the octahedral coordinates in an essential way and we expect the results of this paper to be useful for computing and studying it.

This paper began as an attempt to re-derive and generalize a formula \cite[Theorem 3.2]{Cho2014} of \citeauthor{Cho2014} for boundary-parabolic representations.
\Cref{eq:chi-formula} is similar to but more general than \cite[Theorem 3.2]{Cho2014} and our proof is quite different.
\citeauthor{Cho2014}'s result is more closely connected to the quandle \(\mathcal{P}\) of parabolic elements of \(\pslg\), and in particular to a presentation of it due to \citeauthor{Inoue2014} \cite{Inoue2014}.
It would be interesting to better understand how our results relate to \(\mathcal{P}\) and if this can be generalized to the boundary non-parabolic case.

\subsection*{Organization}
\begin{description}
  \item[\cref{sec:octahedral-colorings}] We define octahedral colorings of tangle diagrams and state how to derive them from a representation of the fundamental group of the tangle complement given in terms of the Wirtinger presentation.
  \item[\cref{sec:holonomy}] We describe the geometrically-derived holonomy of an octahedral coloring and use it to justify our definition of the octahedral coloring associated to a representation.
  \item[\cref{sec:gauge-transformations}] We explain how conjugating a representation \(\rho\) acts on the induced octahedral coloring and use this to show that every representation is conjugate to one coming from a coloring.
  \item[\cref{sec:potential}] We give some applications of our results to the Neumann-Zagier--Yokota potential function and the Volume Conjecture.
\end{description}

\subsection*{Acknowledgements}
I would like to thank Andy Putman for a comment on my MathOverflow question \cite{MathoverflowMeridians} about knot groups that led me to the proof of \cref{thm:arc-faithful}.
I would also like to thank Seonhwa Kim for some useful comments on an early draft of the paper and for introducing me to relevant work of \textcite{Sakuma2018} and \textcite{Garoufalidis2016}.
Finally, I thank the anonymous referee for helpful comments that lead to an improved exposition.

\subsection*{Conventions}
All tangles are smooth and oriented.
Our convention is that tangle diagrams go from left to right, so composition of tangles is horizontal and disjoint union is vertical.
Our sign conventions the and typical labeling of the parts of a crossing are given in \cref{fig:crossing-types}.
Composition of paths is also read left to right: \(fg\) means follow \(f\) then \(g\).
For this reason we usually assume an eigenvector is a row vector, not a column vector.
However, we write \(\evec{1}, \evec{2}\) for the standard \emph{column} vector basis of \(\mathbb{C}^{2}\).
For (column) vectors \(u_1, u_2\) we write
\(
  \det\left( u_1, u_2 \right)
\)
for the determinant of the matrix with columns \(u_1, u_2\), i.e.\@ for the standard antisymmetric pairing on \(\mathbb{C}^2\).

\begin{figure}
  \centering
\begingroup%
  \makeatletter%
  \providecommand\color[2][]{%
    \errmessage{(Inkscape) Color is used for the text in Inkscape, but the package 'color.sty' is not loaded}%
    \renewcommand\color[2][]{}%
  }%
  \providecommand\transparent[1]{%
    \errmessage{(Inkscape) Transparency is used (non-zero) for the text in Inkscape, but the package 'transparent.sty' is not loaded}%
    \renewcommand\transparent[1]{}%
  }%
  \providecommand\rotatebox[2]{#2}%
  \newcommand*\fsize{\dimexpr\f@size pt\relax}%
  \newcommand*\lineheight[1]{\fontsize{\fsize}{#1\fsize}\selectfont}%
  \ifx\svgwidth\undefined%
    \setlength{\unitlength}{283.55544662bp}%
    \ifx\svgscale\undefined%
      \relax%
    \else%
      \setlength{\unitlength}{\unitlength * \real{\svgscale}}%
    \fi%
  \else%
    \setlength{\unitlength}{\svgwidth}%
  \fi%
  \global\let\svgwidth\undefined%
  \global\let\svgscale\undefined%
  \makeatother%
  \begin{picture}(1,0.27643014)%
    \lineheight{1}%
    \setlength\tabcolsep{0pt}%
    \put(0,0){\includegraphics[width=\unitlength,page=1]{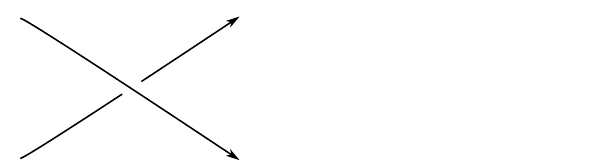}}%
    \put(-0.00351287,0.24426852){\makebox(0,0)[lt]{\lineheight{1.25}\smash{\begin{tabular}[t]{l}$1$\end{tabular}}}}%
    \put(-0.00351287,0.00621985){\makebox(0,0)[lt]{\lineheight{1.25}\smash{\begin{tabular}[t]{l}$2$\end{tabular}}}}%
    \put(0.41174984,0.24426852){\makebox(0,0)[lt]{\lineheight{1.25}\smash{\begin{tabular}[t]{l}$2'$\end{tabular}}}}%
    \put(0.41174984,0.00621985){\makebox(0,0)[lt]{\lineheight{1.25}\smash{\begin{tabular}[t]{l}$1'$\end{tabular}}}}%
    \put(0,0){\includegraphics[width=\unitlength,page=2]{crossing-types.pdf}}%
    \put(0.56563205,0.24427706){\makebox(0,0)[lt]{\lineheight{1.25}\smash{\begin{tabular}[t]{l}$1$\end{tabular}}}}%
    \put(0.56563205,0.00622839){\makebox(0,0)[lt]{\lineheight{1.25}\smash{\begin{tabular}[t]{l}$2$\end{tabular}}}}%
    \put(0.9808948,0.24427706){\makebox(0,0)[lt]{\lineheight{1.25}\smash{\begin{tabular}[t]{l}$2'$\end{tabular}}}}%
    \put(0.9808948,0.00622839){\makebox(0,0)[lt]{\lineheight{1.25}\smash{\begin{tabular}[t]{l}$1'$\end{tabular}}}}%
  \end{picture}%
\endgroup%

  \caption{Positive (left) and negative (right) crossings with our standard labeling.}
  \label{fig:crossing-types}
\end{figure}

\section{Octahedral colorings}
\label{sec:octahedral-colorings}

\begin{definition}
  The \defemph{complement} of a tangle \(T\) is its complement as a submanifold of \([0,1]^{3}\).
  We write \(\pi(T)\) for the fundamental group of the complement.
  Suppose \(T\) has \(n\) incoming and \(m\) outgoing boundary points and let \(D\) be a diagram of \(T\).
  We can think of \(D\) as a decorated \(4\)-valent graph \(G\) embedded in \([0,1] \times [0,1]\) with \(n\) edges intersecting \([0,1] \times \set{0}\) and \(m\) edges intersecting \([0,1] \times \set{1}\).
  We assign names to various parts of \(D\):
  \begin{itemize}
    \item
     The \defemph{segments} of \(D\) are the edges%
      \note{%
        Usually these are called the ``edges'' of the diagram, but we do not want to confuse them with edges of ideal polyhedra in the octahedral decomposition.
      }
      of \(G\).
    \item
    An \defemph{arc} of \(D\) is a set of adjacent over-segments.
    \item
      A \defemph{component} of \(D\) is a set of adjacent segments; these are in bijection with connected components of \(T\).
    \item
    A \defemph{region} of \(D\) is a connected component of the complement of \(G\), equivalently a vertex of the dual graph of \(G\).
    Regions are adjacent across segments, and our convention is to say that region \(j'\) is \defemph{below} region \(j\) when they are arranged as in \cref{fig:region-orientation-rule}.
    In this case we call \(j\) the region \defemph{above} \(i\) and write \(j = \upr i\).
    Similarly \(j'\) is the region \defemph{below} \(i\), written \(j = \dnr i\).
  \end{itemize}
\end{definition}

\begin{marginfigure}
\begingroup%
  \makeatletter%
  \providecommand\color[2][]{%
    \errmessage{(Inkscape) Color is used for the text in Inkscape, but the package 'color.sty' is not loaded}%
    \renewcommand\color[2][]{}%
  }%
  \providecommand\transparent[1]{%
    \errmessage{(Inkscape) Transparency is used (non-zero) for the text in Inkscape, but the package 'transparent.sty' is not loaded}%
    \renewcommand\transparent[1]{}%
  }%
  \providecommand\rotatebox[2]{#2}%
  \newcommand*\fsize{\dimexpr\f@size pt\relax}%
  \newcommand*\lineheight[1]{\fontsize{\fsize}{#1\fsize}\selectfont}%
  \ifx\svgwidth\undefined%
    \setlength{\unitlength}{109.92816067bp}%
    \ifx\svgscale\undefined%
      \relax%
    \else%
      \setlength{\unitlength}{\unitlength * \real{\svgscale}}%
    \fi%
  \else%
    \setlength{\unitlength}{\svgwidth}%
  \fi%
  \global\let\svgwidth\undefined%
  \global\let\svgscale\undefined%
  \makeatother%
  \begin{picture}(1,0.3133619)%
    \lineheight{1}%
    \setlength\tabcolsep{0pt}%
    \put(0,0){\includegraphics[width=\unitlength,page=1]{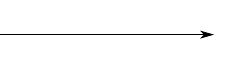}}%
    \put(0.341132,0.23042404){\makebox(0,0)[lt]{\lineheight{1.25}\smash{\begin{tabular}[t]{l}$j = \upr{i}$\end{tabular}}}}%
    \put(0.30019606,0.02574484){\makebox(0,0)[lt]{\lineheight{1.25}\smash{\begin{tabular}[t]{l}$j' = \dnr{i}$\end{tabular}}}}%
    \put(0.95516958,0.16219765){\makebox(0,0)[lt]{\lineheight{1.25}\smash{\begin{tabular}[t]{l}$i$\end{tabular}}}}%
  \end{picture}%
\endgroup%

  \caption{
    In this case we say that region \(j'\) is below region \(j\) across segment \(i\).
    We write \(j = \upr i\) and \(j' = \dnr i\) to refer to the regions adjacent to segment \(i\).
  }
  \label{fig:region-orientation-rule}
\end{marginfigure}

\begin{proposition}
  \label{thm:wirtinger}
  Let \(D\) be a diagram of a tangle \(T\).
  The choice of \(D\) gives a presentation for \(\pi(T)\) with one generator \(w_i\) for each arc and one relation
  \begin{gather}
    \label{eq:wirtinger-positive}
    w_{2'} = w_{1}^{-1} w_{2} w_{1} \text{ (positive)}
    \\
    \label{eq:wirtinger-negative}
    w_{1'} = w_{2} w_{1} w_{2}^{-1} \text{ (negative)}
  \end{gather}
  at each crossing, where the indices are as in \cref{fig:crossing-types}.
  (At a positive crossing \(1\) and \(1'\)  are the same arc.)
  We call this the \defemph{Wirtinger presentation} of \(\pi(T)\) and denote it by \(\pi(D)\).
  If \(i\) is a segment of \(D\) we write \(w_{i}\) for the associated Wirtinger generator.
\end{proposition}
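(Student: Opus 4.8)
This is the classical Wirtinger presentation, established for links in $S^{3}$ in every introductory knot theory text, so the plan is to reproduce that derivation while keeping track of the mild changes forced by working in $[0,1]^{3}$ rather than in $S^{3}$. First I would isotope $T$ into \emph{near-planar position} with respect to the plane $P = [0,1]^{2}\times\set{\tfrac12}$: all of $T$ lies in $P$ except in a small ball around each crossing, where the over-strand bumps slightly above $P$ while the under-strand stays in $P$. In this picture the segments of $D$ are the components of $T\cap P$, an arc is a maximal run of over-segments joined across the bumps, and the over-arcs are exactly the topmost parts of $T$.

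Next I would build the standard handle (equivalently CW) model of $[0,1]^{3}\setminus T$ read off from $D$: a single $0$-handle in the region above $P$; one $1$-handle for each arc $a$, over which the meridian $w_{a}$ runs — a small circle linking $a$ once, oriented by the right-hand rule against the strand orientation; and one $2$-handle for each crossing, attached along the loop traced by a meridian of the incoming under-segment as it is pushed along that strand and underneath the over-arc to a meridian of the outgoing under-segment. Reading generators and relators off this decomposition gives one generator $w_{a}$ per arc and one relation per crossing, and computing the attaching circle of the $2$-handle at a crossing shows the relation is \cref{eq:wirtinger-positive} at a positive crossing and \cref{eq:wirtinger-negative} at a negative one. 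I would justify the model following the usual treatment for links; the only new feature is the faces of $\partial[0,1]^{3}$ met by the endpoints of $T$, where an arc ending on such a face still contributes a $1$-handle but no $2$-handle, so one obtains exactly the presentation claimed.

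The last point is to match what this produces with the labelling of \cref{fig:crossing-types}: that the over-strand at a positive crossing is a single arc, so $1$ and $1'$ denote the same generator; that the relation there reads $w_{2'} = w_{1}^{-1}w_{2}w_{1}$ and not $w_{2'} = w_{1}w_{2}w_{1}^{-1}$; and the analogue at negative crossings. I would settle this by inspecting the attaching circle after fixing the handedness of each meridian from the strand orientations, and cross-check against the one-crossing $(2,2)$-tangle, whose group must reduce to a free group of rank two, and against the standard three-crossing diagram of the trefoil. I do not expect a conceptually hard step: the work lies in making the handle model precise, handling the boundary of the cube, and getting the orientations and exponents in the relations exactly right.
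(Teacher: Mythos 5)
The paper gives no proof of this proposition: it is stated as the classical Wirtinger presentation and used as known background, so there is nothing internal to compare your argument against. Your outline is the standard textbook derivation (a van Kampen / handle-decomposition argument reading one generator off each arc and one relator off each crossing), and it is correct in its essentials. The two points genuinely specific to this paper are the ones you already flag: first, the adaptation to a tangle in \([0,1]^3\) rather than a link in \(S^3\), where arcs meeting \(\partial[0,1]^3\) contribute generators but no relations and there is no top-dimensional handle forcing a redundant relation (consistency check: a crossingless \(n\)-strand tangle must give a free group of rank \(n\)); second, matching the exponents in \cref{eq:wirtinger-positive} and \cref{eq:wirtinger-negative} to the orientation and sign conventions of \cref{fig:crossing-types}, since the conjugating element is \(w_1^{-1}(\cdot)w_1\) at a positive crossing but \(w_2(\cdot)w_2^{-1}\) at a negative one. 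Both are bookkeeping rather than new ideas, and your plan to verify them on the attaching circles and on small examples is adequate.
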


\begin{definition}
  Let \(D\) be a diagram of a tangle \(T\) and \(G\) be a group.
  A \defemph{\(G\)-coloring} of \(D\) is a function \(i \mapsto g_i\) assigning each segment an element of \(g\), subject to the Wirtinger relations \eqref{eq:wirtinger-positive} and \eqref{eq:wirtinger-negative} at each crossing.%
  \note{
    Such a coloring will necessarily assign every segment in an arc the same group element, but we will soon consider octahedral colorings that do not have this property.
  }
\end{definition}

By \cref{thm:wirtinger} \(G\)-colorings of \(D\) are in bijection with representations \(\rho : \pi(D) \to G\).
It turns out that octahedral coordinates correspond to slightly more data: they describe both a representation of \(\pi(T)\) and some extra data on the boundary of a regular neighborhood of \(T\).

\begin{figure}
  \centering
  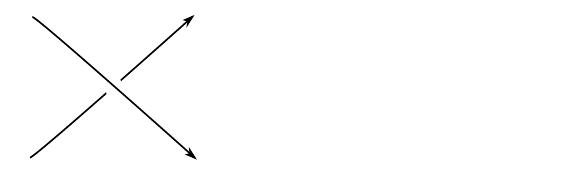
  \caption{Rules for decorated \(\slg\) colorings.}
  \label{fig:decorated-quandle-rule}
\end{figure}

\begin{definition}
  A \defemph{decorated representation} \(\rho\) of a tangle \(T\) is a representation \(\rho : \pi(T) \to \slg\) with a choice of invariant line for each meridian of \(T\).

  More formally, each component \(i\) of the oriented tangle has a conjugacy class of meridians \([w_i] \subset \pi(T)\).
  For a representative \(w_i\) we choose a line \(L_i \subset \mathbb{C}^2\) (thought of as a set of \emph{row} vectors) with
  \[
    L_i \rho(w_i) = L_i.
  \]
  This does not depend on the choice of representative: if \(w_i' = y^{-1} w_i y\) is any other representative of the conjugacy class we assign it the line \(L_i \rho(y)\), since
  \[
    L_i \rho(y) \rho(y^{-1} w_i y) = L_i \rho(y).
  \]
  This choice is called a \defemph{decoration} of component \(i\), and a decoration of \(\rho\) is a decoration of each of its components.
  A decoration of \(\rho\) induces an equivalent decoration of any conjugate \(g^{-1} \rho g\) in a similar way.
\end{definition}

Generically a knot has two decorations, as a diagonalizable matrix has two eigenspaces.
When \(\tr \rho(w) = \pm 2\) but \(\rho \ne \pm 1\) is nontrivial (i.e. when \(\rho\) is boundary-parabolic) there is only one decoration, and when \(\rho(w) = \pm 1\) is trivial there are infinitely many.
Similar statements apply to links and tangles.

\begin{definition}
  A \defemph{decorated \(\slg\)-coloring} assigns each segment \(i\) a group element \(g_i \in \slg\) and a \(1\)-dimensional subspace \(L_i \subset \mathbb{C}^{2}\) invariant under \(g_i\), subject to the rule that
  \begin{gather}
    \label{eq:decorated-positive}
    L_{2'} = L_2 g_1  \text{ (positive)}
    \\
    \label{eq:decorated-negative}
    L_{1'} = L_{1} g_{2} \text{ (negative)}
  \end{gather}
  These are summarized in \cref{fig:decorated-quandle-rule}.
\end{definition}

In our example we cannot multiply two segment colors because \((g, L)\) and \((h, L')\) do not determine an eigenspace of \(gh\), but we can conjugate them.
As such the set of pairs  \((g, L)\) with the binary operation
\[
  (g, L) \triangleleft (h, L') = (h^{-1} g h, L h)
\]
form an algebraic structure \(\mathcal{Q}\) called a \defemph{quandle}.
Quandles generalize the conjugation structure of a group and appear in many places in knot theory.
Decorated \(\slg\)-colorings can be characterized as determining homomorphism from the fundamental quandle of the tangle to \(\mathcal{Q}\).

\begin{proposition}
  Decorated representations \(\rho : \pi(D) \to \slg\) are in bijection with decorated \(\slg\)-colorings of \(D\).
\end{proposition}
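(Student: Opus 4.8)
The plan is to reduce to the bijection between $\slg$-colorings of $D$ and representations $\pi(D) \to \slg$ recorded after \cref{thm:wirtinger}, and then match up the extra data. A decorated $\slg$-coloring $(g_i, L_i)$ has an underlying $\slg$-coloring $i \mapsto g_i$ (its $g$-part satisfies precisely \eqref{eq:wirtinger-positive} and \eqref{eq:wirtinger-negative}), hence an underlying representation $\rho$; a decorated representation has $\rho$ itself. So it suffices to show that, for a fixed $\rho$, the decorations of $\rho$ are in bijection with the systems of lines $(L_i)$ upgrading $i \mapsto g_i = \rho(w_i)$ to a decorated coloring.

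First I would record the elementary fact that if $i$ and $j$ are segments on the same component $c$ of $T$ then $w_i$ and $w_j$ are conjugate meridians: reading along $c$ from $i$ to $j$ and recording at each crossing the over-arc generator with the sign dictated by \cref{fig:crossing-types} produces a word $y_{ij} \in \pi(D)$ with $w_j = y_{ij}^{-1} w_i y_{ij}$. The key point is that the line rules \eqref{eq:decorated-positive} and \eqref{eq:decorated-negative} transform $L$ across a crossing in exactly the way that the transport rule $w_i' = y^{-1} w_i y \mapsto L_i \rho(y)$ from the definition of a decoration does: at a positive crossing $w_{2'} = w_1^{-1} w_2 w_1$ and $L_{2'} = L_2 g_1 = L_2 \rho(w_1)$, and symmetrically at a negative crossing. (In particular the rules are consistent with the requirement that $L_i$ be $g_i$-invariant, since $L_2 g_1$ is fixed by $g_1^{-1} g_2 g_1$.) Composing these local identities along $c$ gives $L_j = L_i \rho(y_{ij})$ for any two segments on $c$ in any decorated coloring.

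With this in hand the two maps are immediate. Given a decorated coloring $(g_i, L_i)$, for each component $c$ pick a segment $i_0$ on $c$; then $L_{i_0}$ is invariant under $g_{i_0} = \rho(w_{i_0})$, so it determines a decoration of the meridian class $[w_{i_0}]$, and the induced line on any other representative $w_j$ (for $j$ on $c$) is $L_{i_0}\rho(y_{i_0 j}) = L_j$, which shows the decoration is independent of the choice of $i_0$ and recovers every $L_i$. Conversely, given $\rho$ with a decoration, set $g_i = \rho(w_i)$ and let $L_i$ be the line the decoration assigns to the representative $w_i$; this is $g_i$-invariant, and \eqref{eq:decorated-positive} and \eqref{eq:decorated-negative} hold because they are instances of the representative-independence built into the definition of a decoration. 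These constructions are mutually inverse, since a decoration is determined by its values on the $w_i$ and those values are exactly what the $L_i$ record. The only real work is the middle paragraph --- showing that a globally consistent system of segment lines carries the same information as one invariant line per meridian conjugacy class --- and since this is the line-level analogue of the passage from Wirtinger colorings to representations, the main obstacle is bookkeeping (tracking the conjugating words $y_{ij}$ and their signs around the diagram) rather than anything conceptually new. Everything is local to a single component, so connectedness of $T$ is irrelevant; one could also phrase the whole argument as the statement that a decorated $\slg$-coloring is the same as a quandle homomorphism from the fundamental quandle of $T$ to $\mathcal{Q}$.
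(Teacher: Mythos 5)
The paper states this proposition without proof, so there is nothing to compare against; on its own terms your argument is correct and is evidently the routine verification the author had in mind: strip off the lines to reduce to the Wirtinger bijection between \(\slg\)-colorings and representations, then observe that the crossing rules for the \(L_i\) are precisely the transport rule \(L_i \mapsto L_i\rho(y)\) from the definition of a decoration, applied to the conjugating words that relate the meridians of a single component. One detail worth flagging: your ``symmetrically at a negative crossing'' silently uses the rule \(L_{1'} = L_1 g_2^{-1}\) from \cref{fig:decorated-quandle-rule} rather than the rule \(L_{1'} = L_1 g_2\) printed in \eqref{eq:decorated-negative}; only the former is an instance of the transport rule for \(w_{1'} = w_2 w_1 w_2^{-1}\) and only the former makes \(L_{1'}\) invariant under \(g_{1'} = g_2 g_1 g_2^{-1}\) in general, so the displayed equation appears to contain a typo that your argument implicitly corrects. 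A smaller point, which the paper itself also glosses over, is that assigning \(L_i\rho(y)\) to the representative \(y^{-1}w_iy\) is well defined only because \(\rho\) of the centralizer of \(w_i\) preserves the eigenlines of \(\rho(w_i)\) when \(\rho(w_i)\neq\pm1\); your specific words \(y_{ij}\) sidestep this, and it does not affect the bijection as intended.
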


Because \(\slg\)  is the double cover of the isometry group \(\pslg\) of hyperbolic space a choice of \(\rho : \pi(T) \to \slg\) determines a (generalized) hyperbolic structure on the tangle complement.
However, this description is rather abstract, and frequently one wants to instead describe the hyperbolic structure by gluing together geometric ideal tetrahedra.
Any tangle diagram determines an \defemph{octahedral decomposition} \cite{Kim2016,McPhailSnyder2022} of its complement in a standard way, which gives the topological data of the gluing.
The geometric information is determined by complex numbers called shape parameters, which are in turn determined by an octahedral coloring of the digram as defined below.
We discuss the shape parameters further in \cref{sec:potential}.

\begin{definition}
  \label{def:octahedral-stuff}
  An \defemph{octahedral color} is a triple \(\chi = (a, b, m) \in (\mathbb{C} \setminus \set{0} )^3\) of nonzero complex numbers.
  When we work with an indexed set \(\set{\chi_{i}}_{i \in I}\) of colors we typically write \(a_i, b_i, m_i\) for their components.
  An \defemph{octahedral coloring} of a tangle diagram \(D\) is an assignment \(i \mapsto \chi_i = (a_i, b_i, m_i)\) of an octahedral color to every segment \(i\) so that at every positive crossing
  \begin{gather}
    \label{eq:a-transf-positive}
    \begin{aligned}
      a_{1'}
      &=
      a_1 A^{-1}
      \\
      a_{2'}
      &=
      a_2 A \text{ where}
      \\
      A &= 1 - \frac{m_1 b_1}{b_2} \left(1 - \frac{a_1}{m_1}\right)\left(1 - \frac{1}{m_2 a_2}\right)
    \end{aligned}
    \\
    \label{eq:b-transf-positive}
    \begin{aligned}
      b_{1'}
      &=
      \frac{m_2 b_2}{m_1}
      \left(
        1 - m_2 a_2 \left( 1 - \frac{b_2}{m_1 b_1} \right)
      \right)^{-1}
      \\
      b_{2'}
      &=
      b_1
      \left(
        1 - \frac{m_1}{a_1}\left( 1 - \frac{b_2}{m_1 b_1} \right)
      \right)
    \end{aligned}
    \\
    \label{eq:m-transf-positive}
    \begin{aligned}
      m_{1'}
      &= m_1
      &
      m_{2'}
      &= m_2
    \end{aligned}
  \end{gather}
  and at every negative crossing
  \begin{gather}
    \label{eq:a-transf-negative}
    \begin{aligned}
      a_{1'}
      &=
      a_1 \tilde A^{-1} 
      \\
      a_{2'}
      &=
      a_2 \tilde A \text{ where}
      \\
      \tilde A 
      &=
      1 - \frac{b_2}{m_1 b_1}\left(1 - m_1 a_1 \right)\left(1 - \frac{m_2}{a_2}\right).
    \end{aligned}
    \\
    \label{eq:b-transf-negative}
    \begin{aligned}
      b_{1'}
      &=
      \frac{m_2 b_2}{m_1}
      \left(
        1 - \frac{a_2}{m_2} \left( 1 - \frac{m_1 b_1}{b_2} \right)
      \right)
      \\
      b_{2'}
      &=
      b_1
      \left(
        1 - \frac{1}{m_1 a_1} \left( 1 - \frac{m_1 b_1}{b_2} \right)
      \right)^{-1}
    \end{aligned}
    \\
    \label{eq:m-transf-negative}
    \begin{aligned}
      m_{1'}
      &= m_1
      &
      m_{2'}
      &= m_2
    \end{aligned}
  \end{gather}
\end{definition}

Our goal is to obtain an octahedral coloring from a decorated \(\slg\)-coloring.
The relevant geometry of the ideal tetrahedra is most naturally described in terms of the face-pairing maps of the triangulation.
These define a representation of a fundamental \emph{groupoid} with one basepoint for each tetrahedron.
This groupoid is closely related (but not quite the same as) the fundamental groupoid studied in \cref{sec:holonomy}.
In contrast, the Wirtinger generators are global and have a single common basepoint.
To relate these to the fundamental groupoid we need to keep track of change-of-basepoint data using additional colors called shadows.

\begin{definition}
  Let \(i \mapsto g_i\) be a \(\slg\)-coloring of a tangle diagram \(D\) (possibly decorated).  
  A \defemph{shadow coloring} of \((D,g)\) is an assignment \(j \mapsto u_j\) of nonzero column vectors in \(\mathbb{C}^{2}\) to each region, subject to the rule
  \[
    u_{j'} = g_i u_j
  \]
  when \(j'\) is below \(j\) across \(i\) as in \cref{fig:region-orientation-rule}.
\end{definition}

\begin{lemma}
  \label{thm:shadow-colors-free}
  For a fixed \(\slg\)-coloring the space of shadow colorings is nonempty and in bijection with \(\mathbb{C}^2 \setminus \set{0}\).
\end{lemma}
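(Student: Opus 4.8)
The plan is to recognize a shadow coloring as a flat section --- a parallel transport over the dual graph of $D$ --- whose consistency is governed precisely by the Wirtinger relations. Concretely, form the graph $\Gamma$ with one vertex for each region of $D$ and one edge for each segment $i$, joining $\upr i$ to $\dnr i$; a shadow coloring is then an assignment of a nonzero vector $u_j$ to each vertex $j$ satisfying $u_{\dnr i} = g_i u_{\upr i}$ across every edge $i$. Since any two points of $[0,1]^2$ can be joined by a path transverse to $D$ that misses the crossings, $\Gamma$ is connected; and (after closing the tangle up to a link) a $\mathbb{Z}/2$ intersection-number argument shows that no segment of $D$ has the same region on both sides, so $\Gamma$ has no loop edges and the defining condition is never overdetermined on a single edge. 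Now fix a base region $j_0$ and a vector $v \in \mathbb{C}^2 \setminus \set{0}$. Choosing a spanning tree of $\Gamma$ and propagating $v$ out from $j_0$ along it, using $u_{\dnr i} = g_i u_{\upr i}$ and its inverse, produces the unique function $j \mapsto u_j$ with $u_{j_0} = v$ that has any chance of being a shadow coloring, and since every $g_i \in \slg$ is invertible all of these vectors are nonzero. Thus $(u_j)_j \mapsto u_{j_0}$ is injective from the set of shadow colorings into $\mathbb{C}^2 \setminus \set{0}$, and the lemma reduces to showing that this candidate also satisfies the rule across the non-tree edges.

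Equivalently, one must show that for every loop $\gamma$ in $\Gamma$ the holonomy --- the ordered product of the $g_i^{\pm 1}$ read off as $\gamma$ crosses the segments of $D$ --- is the identity. This holonomy is in fact defined for any loop in $[0,1]^2$ transverse to $D$ and avoiding its crossings, and is unchanged under homotopies of such loops, so it descends to a homomorphism from the fundamental group of $[0,1]^2$ with the crossings removed, which is free on the small loops encircling individual crossings. It therefore suffices to treat a single crossing. A direct inspection of \cref{fig:crossing-types,fig:region-orientation-rule} at a positive crossing shows that going once around its four adjacent regions multiplies a region's vector by $g_{2'}^{-1} g_1^{-1} g_2 g_1$ --- the precise word depends on where one starts and which way one goes, but only up to inversion and conjugation --- and this is the identity by the Wirtinger relation \cref{eq:wirtinger-positive}; the negative crossing is handled identically using \cref{eq:wirtinger-negative}. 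Hence every holonomy vanishes, the candidate is a genuine shadow coloring, and $(u_j)_j \mapsto u_{j_0}$ is a bijection onto $\mathbb{C}^2 \setminus \set{0}$, which in particular is nonempty.

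I expect no serious obstacle: the only genuine computation is identifying the around-a-crossing holonomy with the Wirtinger relator, a short $\slg$ calculation, and everything else is the familiar picture of reading a holonomy off a diagram together with the fact that $\pi_1$ of a punctured disk is free on loops around the punctures. The one place that needs care is bookkeeping --- lining up the orientation conventions of \cref{fig:crossing-types,fig:region-orientation-rule} so that the crossing word comes out as \cref{eq:wirtinger-positive} rather than its inverse or a conjugate --- but as any of those is trivial this is cosmetic rather than substantive.
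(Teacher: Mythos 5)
Your proposal is correct and takes essentially the same route as the paper: the paper also observes that a shadow coloring is determined by the color of a single region (giving the bijection with \(\mathbb{C}^2 \setminus \set{0}\)) and that existence reduces to consistency around each crossing, which for a positive crossing is the identity \(g_2 g_1 = g_1 (g_1^{-1} g_2 g_1)\), i.e.\@ exactly your Wirtinger-relator holonomy computation. You simply spell out the spanning-tree and free-group-on-punctures justification that the paper leaves implicit.
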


\begin{marginfigure}
  \centering
\begingroup%
  \makeatletter%
  \providecommand\color[2][]{%
    \errmessage{(Inkscape) Color is used for the text in Inkscape, but the package 'color.sty' is not loaded}%
    \renewcommand\color[2][]{}%
  }%
  \providecommand\transparent[1]{%
    \errmessage{(Inkscape) Transparency is used (non-zero) for the text in Inkscape, but the package 'transparent.sty' is not loaded}%
    \renewcommand\transparent[1]{}%
  }%
  \providecommand\rotatebox[2]{#2}%
  \newcommand*\fsize{\dimexpr\f@size pt\relax}%
  \newcommand*\lineheight[1]{\fontsize{\fsize}{#1\fsize}\selectfont}%
  \ifx\svgwidth\undefined%
    \setlength{\unitlength}{108.86558533bp}%
    \ifx\svgscale\undefined%
      \relax%
    \else%
      \setlength{\unitlength}{\unitlength * \real{\svgscale}}%
    \fi%
  \else%
    \setlength{\unitlength}{\svgwidth}%
  \fi%
  \global\let\svgwidth\undefined%
  \global\let\svgscale\undefined%
  \makeatother%
  \begin{picture}(1,0.76955296)%
    \lineheight{1}%
    \setlength\tabcolsep{0pt}%
    \put(0,0){\includegraphics[width=\unitlength,page=1]{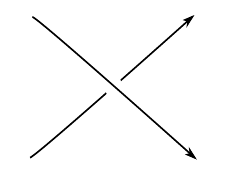}}%
    \put(0.0215491,0.73178434){\makebox(0,0)[lt]{\lineheight{1.25}\smash{\begin{tabular}[t]{l}$g_1$\end{tabular}}}}%
    \put(0.02154686,-0.02603094){\makebox(0,0)[lt]{\lineheight{1.25}\smash{\begin{tabular}[t]{l}$g_2$\end{tabular}}}}%
    \put(0.75869458,0.73526858){\makebox(0,0)[lt]{\lineheight{1.25}\smash{\begin{tabular}[t]{l}$g_1^{-1} g_2 g_1$\end{tabular}}}}%
    \put(0.75869458,-0.0260219){\makebox(0,0)[lt]{\lineheight{1.25}\smash{\begin{tabular}[t]{l}$g_1$\end{tabular}}}}%
    \put(0.44604092,0.63225879){\makebox(0,0)[lt]{\lineheight{1.25}\smash{\begin{tabular}[t]{l}$u$\end{tabular}}}}%
    \put(0.10429751,0.41869915){\makebox(0,0)[lt]{\lineheight{1.25}\smash{\begin{tabular}[t]{l}$g_1 u$\end{tabular}}}}%
    \put(0.36515411,0.13240361){\makebox(0,0)[lt]{\lineheight{1.25}\smash{\begin{tabular}[t]{l}$g_2 g_1 u$\end{tabular}}}}%
    \put(0.73009579,0.4177686){\makebox(0,0)[lt]{\lineheight{1.25}\smash{\begin{tabular}[t]{l}$g_1^{-1} g_2 g_1 u$\end{tabular}}}}%
  \end{picture}%
\endgroup%

  \caption{We can consistently assign colors to the regions around a positive crossing because \(g_2 g_1 = g_1 (g_1^{-1} g_2 g_1)\). Negative crossings are similar.}
  \label{fig:shadow-rule-around}
\end{marginfigure}

\begin{proof}
  To see that shadow colorings exist it is enough to check that the rules are consistent around each crossing.
  For a positive crossing (shown in \cref{fig:shadow-rule-around}) this is because \(g_2 g_1 = g_1 (g_1^{-1} g_2 g_1)\), and negative crossings are similar.

  Once the color \(u_j\) of a single region \(j\) is chosen the rules determine a unique color for every other region, so the space of shadow colorings is parametrized by \(u_j \in \mathbb{C}^{2} \setminus \set{0}\).
\end{proof}

\begin{definition}
  \label{def:associated-octahedral}
  Consider a \(\slg\)-shadow-colored digram \((D, g, u)\).
  For each segment \(i\), let \(v_i\) be a nonzero element of \(L_i\); our definition is independent of this choice.
  Let \(m_i\) be the \emph{inverse}%
  \note{
    This choice is made to match the conventions of \cite{McPhailSnyderAlgebra}, which were established before this geometric interpretation was fully understood.
  }
  eigenvalue of \(g_i\) on \(v_i\):
   \[
     v_i g_i = m_{i}^{-1} v_{i}.
  \]
  We say \((D, g, u)\) is  \defemph{admissible} if 
  \begin{gather*}
    \det( u_{j}, \evec{2} ) \ne 0 \text{ for each region \(j\), and}
    \\
    v_i \evec{2}, v_i u_{\upr{i}} \ne 0 \text{ for each segment \(i\).}
  \end{gather*}
  In this case we can assign segment \(i\) below region \(\upr{i}\) and above region \(\dnr{i}\) the octahedral color
  \begin{equation}
    \label{eq:chi-formula}
    \chi_i =
    \left(
      \frac{
        \det( u_{\dnr{i}} , \evec{2} )
        }{
        \det( u_{\upr{i}} , \evec{2} )
      }
      ,
      -
      \frac{
        v_i \evec{2}
      }{
        v_i u_{\upr{i}}
      }
      ,
      m_i
    \right)
  \end{equation}
  and we call this the octahedral coloring \defemph{associated} to \((D, g, u)\).
\end{definition}

\begin{theorem}
  The coloring \(i \mapsto \chi_i\) defined by \eqref{eq:chi-formula} is an octahedral coloring: it satisfies the conditions of \cref{def:octahedral-stuff}.
\end{theorem}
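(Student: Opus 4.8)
\textit{Proof proposal.} The plan is to verify the three families of identities --- those governing $a$, $b$, and $m$ --- directly and locally, at a single positive crossing; the negative crossing is then handled by the same computation (or, if one prefers, by a symmetry argument). Fix a positive crossing with incoming segments $1,2$ and outgoing segments $1',2'$, labelled as in \cref{fig:crossing-types,fig:decorated-quandle-rule}. The $\slg$-coloring gives $g_{1'} = g_1$ and $g_{2'} = g_1^{-1} g_2 g_1$; the decoration rule gives $L_{1'} = L_1$ and $L_{2'} = L_2 g_1$, so we may take $v_{1'} = v_1$ and $v_{2'} = v_2 g_1$ (the overall scalings of the $v_i$ are irrelevant to $\chi$); and, writing $u$ for the shadow color of one of the four regions meeting the crossing, the shadow rule forces the other three to be $g_1 u$, $g_2 g_1 u$, $g_1^{-1} g_2 g_1 u$ exactly as in \cref{fig:shadow-rule-around}. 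Thus all six output quantities $a_{1'}, a_{2'}, b_{1'}, b_{2'}, m_{1'}, m_{2'}$ become explicit rational functions of $g_1, g_2, v_1, v_2, u$, and the task is to match them with the right-hand sides of \eqref{eq:a-transf-positive}--\eqref{eq:m-transf-positive}.

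First I would dispatch the $m$-equations: $m_{1'}$ is by definition the inverse eigenvalue of $g_{1'} = g_1$ on $v_{1'} = v_1$, hence $m_{1'} = m_1$, while $(v_2 g_1)(g_1^{-1} g_2 g_1) = m_2^{-1}(v_2 g_1)$ gives $m_{2'} = m_2$. For the $a$-equations I would read off from \cref{fig:region-orientation-rule} which of the four regions around the crossing is $\upr{i}$ and which is $\dnr{i}$ for each of the four segments. Using $\det(w, \evec{2}) = w^{(1)}$ and the shadow rule $u_{\dnr{i}} = g_i u_{\upr{i}}$, each $a_i$ is then a ratio of first coordinates of two of the vectors $u, g_1 u, g_2 g_1 u, g_1^{-1} g_2 g_1 u$. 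From this presentation the relation $a_{1'} a_{2'} = a_1 a_2$ telescopes out (the two ways around the crossing have the same endpoints), so \eqref{eq:a-transf-positive} reduces to the single identity $A = a_{2'}/a_2$; this reduction is self-consistent, since \eqref{eq:a-transf-positive} itself forces $a_{1'} a_{2'} = a_1 a_2$.

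The substantive computation is then to express the single ratio $A$, together with $b_{1'}$ and $b_{2'}$, in terms of $a_1, a_2, b_1, b_2, m_1, m_2$. Here I would expand the matrix products $g_1 u$ and $g_1^{-1} g_2 g_1 u$ using that $v_1, v_2$ are left eigenrows of $g_1, g_2$: decompose $u$ and $g_1 u$ along the relevant eigenlines and apply the Cramer-type identity $w\,\det(x,y) = x\,\det(w,y) - y\,\det(w,x)$ for vectors in $\mathbb{C}^2$, together with $\det(gx, gy) = \det(x,y)$ for $g \in \slg$. The combinations $a_1/m_1$, $1/(m_2 a_2)$, and $m_1 b_1 / b_2$ appearing in $A$ --- and the analogous combinations in \eqref{eq:b-transf-positive} --- should then emerge as ratios of such determinant pairings (recall $b_i = -\,v_i \evec{2} / (v_i u_{\upr{i}})$ is exactly such a ratio), after which the claimed formulas are a matter of algebraic simplification. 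As a bookkeeping aid one can use the one-dimensional subgroup of $\slg$ fixing $\evec{2}$ to normalize $u$ and rescale the $v_i$, cutting down the number of free parameters.

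The main obstacle is organizational rather than conceptual: keeping the $2 \times 2$ linear algebra under control, and in particular pinning down the orientation conventions (which region is $\upr{i}$, and the direction in which each segment is traversed at the crossing) so that the bookkeeping matches \cref{fig:crossing-types,fig:region-orientation-rule,fig:shadow-rule-around} exactly --- a stray sign or inversion here propagates into every identity. Once the positive crossing is settled, the negative crossing follows by running the same argument with the data dictated by \eqref{eq:wirtinger-negative} and \cref{fig:decorated-quandle-rule}. Finally, the well-definedness and nonvanishing of each $\chi_i$ (so that it lands in $(\mathbb{C} \setminus \{0\})^3$ as required) is exactly what the admissibility hypotheses of \cref{def:associated-octahedral} guarantee: $\det(u_j, \evec{2}) \ne 0$ makes each $a_i$ a nonzero finite ratio, and $v_i \evec{2}, v_i u_{\upr{i}} \ne 0$ does the same for $b_i$, while $m_i \ne 0$ since $g_i$ is invertible.
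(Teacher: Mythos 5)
Your proposal is correct and takes essentially the same approach as the paper: parametrize the local data at a crossing by $g_1, g_2, L_1, L_2, u$, observe that the components of $\chi$ are rational functions of these, and verify the relations of \cref{def:octahedral-stuff} by direct computation (the paper simply asserts ``one can check,'' while you add useful organizational detail such as the telescoping $a_{1'}a_{2'}=a_1a_2$ and the eigenrow/determinant bookkeeping for the $b$-identities). The closing observation that admissibility is exactly what keeps each $\chi_i$ in $(\mathbb{C}\setminus\{0\})^3$ is also right.
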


\begin{proof}
  We can parametrize the space of all \(\slg\) decorated shadow colorings of a crossing by the coordinates of \(g_1, g_2, L_1, L_2, u\) (in the notation of \cref{fig:decorated-quandle-rule}), say by using homogeneous coordinates for \(L_1, L_2 \in \psp\).
  The components \(a_i, b_i, m_i\) of the associated octahedral coloring are rational functions in these.
  One can check that these rational functions agree with \cref{eq:a-transf-positive,eq:b-transf-positive,eq:m-transf-positive}, and similarly \cref{eq:a-transf-negative,eq:b-transf-negative,eq:b-transf-negative} for a negative crossings.
\end{proof}

Our proof is algebraic and not very enlightening.
What we really want to know is that the holonomy of the octahedral coloring coming from the decomposition into geometric ideal tetrahedra agrees with the representation \(\rho\), which is \cref{thm:holonomies-match} of \cref{sec:holonomy}.

\section{The holonomy of an octahedral coloring}
\label{sec:holonomy}

In this section we define the holonomy representation of an octahedral coloring and show that it is compatible with our method of assigning an octahedral coloring to a shadowed decorated \(\slg\)-coloring, thus justifying \cref{def:associated-octahedral}.
To define the holonomy we must first introduce a groupoid presentation of \(\pi(D)\).

Recall the characterization of a group \(G\) as a category with a single object \(\bullet\) and an invertible morphism \(g : \bullet \to \bullet\) for each \(g \in G\).
A \defemph{groupoid} is the natural generalization: there can be more than one object, but all morphisms are still invertible.
For example, instead of the fundamental group of a topological space with a single basepoint, one can consider a fundamental groupoid with multiple basepoints whose morphisms \(p \to q\) are homotopy classes of paths from \(p\) to \(q\).
Two paths (morphisms) \(f\) and \(g\) are composable only when the endpoint (codomain) of \(f\) is the startpoint (domain) of \(g\).
For a tangle complement it is natural to put one basepoint in each region.

\begin{marginfigure}
\begingroup%
  \makeatletter%
  \providecommand\color[2][]{%
    \errmessage{(Inkscape) Color is used for the text in Inkscape, but the package 'color.sty' is not loaded}%
    \renewcommand\color[2][]{}%
  }%
  \providecommand\transparent[1]{%
    \errmessage{(Inkscape) Transparency is used (non-zero) for the text in Inkscape, but the package 'transparent.sty' is not loaded}%
    \renewcommand\transparent[1]{}%
  }%
  \providecommand\rotatebox[2]{#2}%
  \newcommand*\fsize{\dimexpr\f@size pt\relax}%
  \newcommand*\lineheight[1]{\fontsize{\fsize}{#1\fsize}\selectfont}%
  \ifx\svgwidth\undefined%
    \setlength{\unitlength}{140.23033333bp}%
    \ifx\svgscale\undefined%
      \relax%
    \else%
      \setlength{\unitlength}{\unitlength * \real{\svgscale}}%
    \fi%
  \else%
    \setlength{\unitlength}{\svgwidth}%
  \fi%
  \global\let\svgwidth\undefined%
  \global\let\svgscale\undefined%
  \makeatother%
  \begin{picture}(1,0.37438403)%
    \lineheight{1}%
    \setlength\tabcolsep{0pt}%
    \put(0,0){\includegraphics[width=\unitlength,page=1]{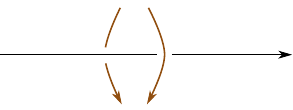}}%
    \put(0.57762114,0.26741715){\makebox(0,0)[lt]{\lineheight{1.25}\smash{\begin{tabular}[t]{l}$x_i^+$\end{tabular}}}}%
    \put(0.25672045,0.26741715){\makebox(0,0)[lt]{\lineheight{1.25}\smash{\begin{tabular}[t]{l}$x_i^-$\end{tabular}}}}%
    \put(0,0){\includegraphics[width=\unitlength,page=2]{tangle-groupoid-generators.pdf}}%
    \put(0.64180094,0.10696684){\makebox(0,0)[lt]{\lineheight{1.25}\smash{\begin{tabular}[t]{l}$i$\end{tabular}}}}%
  \end{picture}%
\endgroup%

  \caption{Generators of the fundamental groupoid \(\gpoid{D}\) of a tangle diagram.}
  \label{fig:tangle-groupoid-generators}
\end{marginfigure}

\begin{definition}
  \label{def:tangle-groupoid}
  Let \(D\) be a tangle diagram.
  The \defemph{fundamental groupoid} \(\gpoid{D}\) of \(D\) has
  \begin{itemize}
    \item one object (i.e.\@ basepoint) for each  region of \(D\),
    \item two generators \(x_i^{\pm}\) for each segment \(i\), representing paths above and below the segment (see \cref{fig:tangle-groupoid-generators}), and
    \item three relations for each crossing:

  \begin{marginfigure}
\begingroup%
  \makeatletter%
  \providecommand\color[2][]{%
    \errmessage{(Inkscape) Color is used for the text in Inkscape, but the package 'color.sty' is not loaded}%
    \renewcommand\color[2][]{}%
  }%
  \providecommand\transparent[1]{%
    \errmessage{(Inkscape) Transparency is used (non-zero) for the text in Inkscape, but the package 'transparent.sty' is not loaded}%
    \renewcommand\transparent[1]{}%
  }%
  \providecommand\rotatebox[2]{#2}%
  \newcommand*\fsize{\dimexpr\f@size pt\relax}%
  \newcommand*\lineheight[1]{\fontsize{\fsize}{#1\fsize}\selectfont}%
  \ifx\svgwidth\undefined%
    \setlength{\unitlength}{144bp}%
    \ifx\svgscale\undefined%
      \relax%
    \else%
      \setlength{\unitlength}{\unitlength * \real{\svgscale}}%
    \fi%
  \else%
    \setlength{\unitlength}{\svgwidth}%
  \fi%
  \global\let\svgwidth\undefined%
  \global\let\svgscale\undefined%
  \makeatother%
  \begin{picture}(1,0.67307788)%
    \lineheight{1}%
    \setlength\tabcolsep{0pt}%
    \put(0,0){\includegraphics[width=\unitlength,page=1]{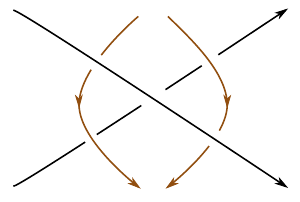}}%
    \put(-0.0032803,0.63811369){\makebox(0,0)[lt]{\lineheight{1.25}\smash{\begin{tabular}[t]{l}$1$\end{tabular}}}}%
    \put(-0.0032803,0.03553828){\makebox(0,0)[lt]{\lineheight{1.25}\smash{\begin{tabular}[t]{l}$2$\end{tabular}}}}%
    \put(0.97038872,0.04534311){\makebox(0,0)[lt]{\lineheight{1.25}\smash{\begin{tabular}[t]{l}$1'$\end{tabular}}}}%
    \put(0.97038872,0.64305345){\makebox(0,0)[lt]{\lineheight{1.25}\smash{\begin{tabular}[t]{l}$2'$\end{tabular}}}}%
    \put(0.07148831,0.32196937){\makebox(0,0)[lt]{\lineheight{1.25}\smash{\begin{tabular}[t]{l}$x_1^- x_2^+$\end{tabular}}}}%
    \put(0.79194551,0.32196937){\makebox(0,0)[lt]{\lineheight{1.25}\smash{\begin{tabular}[t]{l}$x_{2'}^+ x_{1'}^-$\end{tabular}}}}%
  \end{picture}%
\endgroup%

    \caption{Deriving the middle relation at a crossing.}
    \label{fig:tangle-groupoid-relations}
  \end{marginfigure}

  \begin{gather}
    \label{eq:groupoid-relation-above}
    x_1^+ x_2^+ = x_{2'}^+ x_{1'}^+
    \\
    \label{eq:groupoid-relation-below}
    x_1^- x_2^- = x_{2'}^- x_{1'}^-
    \\
    \label{eq:groupoid-relation-thru}
    \begin{cases}
      x_1^{-} x_{2}^{+}
      =
      x_{2'}^{+} x_{1'}^{-}
      &
      \text{ for a positive crossing, or}
      \\
      x_1^{+} x_{2}^{-}
      =
      x_{2'}^{-} x_{1'}^{+}
      &
      \text{ for a negative crossing.} 
    \end{cases}
  \end{gather}
  where the indices \(1,2, 1', 2'\) refer to the segments at the crossing as in \cref{fig:crossing-types}.
  \end{itemize}

\end{definition}

One can derive relations (\ref{eq:groupoid-relation-above},\ref{eq:groupoid-relation-below},\ref{eq:groupoid-relation-thru}) by drawing diagrams like \cref{fig:tangle-groupoid-relations}.
It is possible to prove that \(\Pi(D)\) is equivalent to the fundamental group \(\pi(T)\) and thus to \(\pi(D)\) by using the groupoid version \cite[Chapter 9]{Brown2006} of the van Kampen theorem.
We do not need this result so we omit the details.

For an octahedral color \(\chi = (a, b, m)\) we write
\begin{gather}
  \label{eq:chi-up-hol}
  \upf{\chi} \defeq
  \begin{bmatrix}
    a & 0 \\
    (a - 1/m)/b & 1
  \end{bmatrix}
  \\
  \label{eq:chi-down-hol}
  \dnf{\chi} \defeq
  \begin{bmatrix}
    1 & (a - m)b \\
    0 & a
  \end{bmatrix}
  \intertext{and}
  \label{eq:chi-around-hol}
  \aroundhol{\chi}
  \defeq
  \upf{\chi} \dnf{\chi}^{-1}
  =
  \begin{bmatrix}
    a & -(a-m) b \\
    (a-1/m)/b & m + m^{-1} -a
  \end{bmatrix}
\end{gather}

\begin{definition}
  \label{def:holonomy}
  Let \(D\) be a tangle diagram with an octahedral coloring \(\chi\).
  The \defemph{holonomy representation} is the representation (i.e.\@ functor)
  \[
    \Hol{\chi} : \gpoid{D} \to \glg
  \]
  defined by%
  \note{
    One could instead obtain a representation \(\gpoid{D} \to \pslg\) by dividing by an arbitrary square root of \(a\).
    A lift \(\pi(D) \to \slg\) can be recovered by choosing a square root.
    This approach is geometrically more natural but algebraically inconvenient.
  }
  \begin{equation}
    \Hol{\chi}(x^{+}_{i}) =
    \upf{\chi_i}
    \text{ and }
    \Hol{\chi}(x^{-}_{i}) =
    \dnf{\chi_i}
  \end{equation}
  for each segment \(i\) of \(D\).
\end{definition}

\begin{lemma}
  \(\Hol{\chi}\) is well-defined.
\end{lemma}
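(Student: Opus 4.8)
\subsection*{Proof proposal}

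The plan is to check that the matrices of \cref{def:holonomy} respect the presentation of $\gpoid{D}$ from \cref{def:tangle-groupoid}. That presentation has generators $x_i^{\pm}$ subject only to the three families of relations \cref{eq:groupoid-relation-above,eq:groupoid-relation-below,eq:groupoid-relation-thru} at each crossing, so a functor $\gpoid{D}\to\glg$ is determined by — and exists for — any assignment of the generators to invertible matrices satisfying those relations. First I would note that the assigned matrices are invertible: from \eqref{eq:chi-up-hol} and \eqref{eq:chi-down-hol} one reads off $\det\upf{\chi_i}=\det\dnf{\chi_i}=a_i\neq 0$ (and $b_i\neq 0$ is what lets one write the entries at all). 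Tracking the paper's left-to-right convention for composition of paths — under which the holonomy, acting on row vectors by right multiplication, is a covariant functor — the relations \cref{eq:groupoid-relation-above,eq:groupoid-relation-below,eq:groupoid-relation-thru} become, at a positive crossing, the matrix identities
\[
  \upf{\chi_1}\,\upf{\chi_2}=\upf{\chi_{2'}}\,\upf{\chi_{1'}},\qquad
  \dnf{\chi_1}\,\dnf{\chi_2}=\dnf{\chi_{2'}}\,\dnf{\chi_{1'}},\qquad
  \dnf{\chi_1}\,\upf{\chi_2}=\upf{\chi_{2'}}\,\dnf{\chi_{1'}},
\]
together with the three analogous identities at a negative crossing, the last of which reads $\upf{\chi_1}\,\dnf{\chi_2}=\dnf{\chi_{2'}}\,\upf{\chi_{1'}}$.

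It then remains to substitute the coloring rules of \cref{def:octahedral-stuff} — \cref{eq:a-transf-positive,eq:b-transf-positive,eq:m-transf-positive} at a positive crossing and \cref{eq:a-transf-negative,eq:b-transf-negative,eq:m-transf-negative} at a negative one — and verify these six $2\times2$ matrix equations directly. Several observations make this manageable and I would use them to organize the check: the $m$-coordinates are unchanged across a crossing, so only the $a$- and $b$-data move; the matrices $\upf{}$ are lower triangular and the $\dnf{}$ upper triangular, so the ``above'' relation reduces to scalar identities among first-column entries, the ``below'' relation to scalar identities among top-row entries, and only the ``through'' relation genuinely couples the two triangular types; and the diagonal content already matches, since \eqref{eq:a-transf-positive} gives $a_{1'}a_{2'}=a_1a_2$. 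What is left is a short list of rational-function identities in $a_1,a_2,b_1,b_2,m_1,m_2$, which can be cleared of denominators and checked by hand or by computer algebra.

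I expect the only real friction to be bookkeeping rather than mathematics: keeping the order of the matrix products consistent with the left-to-right composition convention, and carrying the negative-crossing case — where the ``through'' relation swaps the roles of $\upf{}$ and $\dnf{}$ — in parallel with the positive one. It is worth remarking that this verification is no accident: reading the three matrix identities above as equations for $(a_{1'},a_{2'},b_{1'},b_{2'})$ in terms of $(a_i,b_i,m_i)$, they determine these quantities, and \cref{eq:a-transf-positive,eq:b-transf-positive,eq:m-transf-positive} simply record the solution; so one could equivalently \emph{define} the coloring rules as the compatibility conditions for $\Hol{\chi}$ and derive the explicit formulas by solving, which is arguably the cleaner way to present the argument.
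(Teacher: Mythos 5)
Your proposal is correct and follows essentially the same route as the paper, which simply states that it is ``elementary (but tedious)'' to check that the matrices assigned to the generators \(x_i^{\pm}\) satisfy the three crossing relations of \cref{def:tangle-groupoid} whenever the colors are related as in \cref{def:octahedral-stuff}; your write-up just organizes that same verification more explicitly (invertibility, the triangular structure, the six matrix identities). No gap.
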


\begin{proof}
  It is elementary (but tedious) to check that whenever the octahedral colors at a crossing are related as in \cref{def:octahedral-stuff} the holonomy matrices satisfy the relations of \cref{def:tangle-groupoid}.
\end{proof}

The holonomy representation is related to the one generated by the face maps of the octahedral decomposition, as discussed in \cite[Section 3]{McPhailSnyder2022}.
To relate it to representations of \(\pi(D)\) we define a functor \(\pi(D) \to \Pi(D)\) constructed using a natural family of paths in \(\Pi(D)\):
\begin{definition}
  \label{def:shadows}
  Let \(D\) be a tangle diagram with topmost region \(0\) and let \(j\) be a region of \(D\).
  An \defemph{over path} \(s_j^+\) is a path \(0 \to j\) that passes over each strand.
  It is clear it unique up to homotopy.
\end{definition}

To write these paths as products of generators, choose a path of adjacent regions from \(0\) to \(j\).
If that path crosses segments \(i_1, \dots, i_k\) in that order, then
\begin{equation}
  \label{eq:shadow-path-rule}
  s_j^+ = \left(x_{i_1}^+\right)^{\epsilon_1} \cdots \left(x_{i_k}^+\right)^{\epsilon_k}
\end{equation}
where \(\epsilon_i = + 1\) if segment \(i\) is oriented left-to-right as we cross it from the top and \(-1\) otherwise, as in \cref{fig:region-orientation-rule}.
Independence of \cref{eq:shadow-path-rule} from the choice of path follows from \cref{eq:groupoid-relation-above,eq:groupoid-relation-below}.

\begin{theorem}
  \label{thm:functor}
  Let \(D\) be a tangle diagram.
  For each segment \(i\) write \(\upr{i}\) for the region above \(i\) and set
  \[
    \mathcal{F}(w_i) \defeq s^+_{\upr{i}} \left[x_i^{+} x_i^{-} \right] (s_{\upr{i}}^+)^{-1} 
  \]
  where \(w_i\) is the Wirtinger generator associated to \(i\).
  Then \(\mathcal{F}\) is a well-defined homomorphism (i.e.,\@ functor) \(\pi(D) \to \gpoid{D}\).
\end{theorem}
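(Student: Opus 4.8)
The plan is to invoke the universal property of the Wirtinger presentation. Regarding the group \(\pi(D)\) as a groupoid with a single object, a functor \(\pi(D) \to \gpoid{D}\) amounts to a choice of object of \(\gpoid{D}\) — here the topmost region \(0\) — together with a group homomorphism from \(\pi(D)\) into the vertex group \(\gpoid{D}(0,0)\) of loops at \(0\). By \cref{thm:wirtinger}, such a homomorphism is determined by the images of the generators \(w_i\) as soon as the Wirtinger relations \eqref{eq:wirtinger-positive} and \eqref{eq:wirtinger-negative} hold for those images. So two things must be checked. First, \(\mathcal{F}(w_i)\) should be a well-defined element of \(\gpoid{D}(0,0)\) depending only on the arc containing \(i\): the paths \(s^+_{\upr i}\) are already well-defined up to homotopy by the discussion following \eqref{eq:shadow-path-rule}, so the only real point is independence of the particular segment chosen within an arc. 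Second, \(\mathcal{F}\) must send each Wirtinger relator to the constant loop, i.e.\@ \(\mathcal{F}(w_{2'}) = \mathcal{F}(w_1)^{-1}\mathcal{F}(w_2)\mathcal{F}(w_1)\) at each positive crossing and \(\mathcal{F}(w_{1'}) = \mathcal{F}(w_2)\mathcal{F}(w_1)\mathcal{F}(w_2)^{-1}\) at each negative one.

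Both conditions are local at a single crossing, and the only tools needed are the three crossing relations \eqref{eq:groupoid-relation-above}, \eqref{eq:groupoid-relation-below}, and \eqref{eq:groupoid-relation-thru} together with the rewriting rule \eqref{eq:shadow-path-rule}, which expresses \(s^+_j\) for a region \(j\) adjacent to the crossing in terms of \(s^+\) of the top region and a short word in the \(x^+\)-generators crossing the local segments. For the arc-independence, at a positive crossing the over-strand contributes the adjacent over-segments \(1\) and \(1'\) (a negative crossing contributes \(2\) and \(2'\)); I rewrite \(\mathcal{F}(w_1)\) and \(\mathcal{F}(w_{1'})\) with the same \(s^+\)-prefix, cancel it, and reduce \(\mathcal{F}(w_1) = \mathcal{F}(w_{1'})\) to an identity among the \(x^\pm\) at the crossing that follows by combining the three relations. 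For the Wirtinger relators I do the same: at a positive crossing rewrite \(\mathcal{F}(w_1)\), \(\mathcal{F}(w_2)\), \(\mathcal{F}(w_{2'})\) over a common prefix, cancel it, and check that the resulting identity in the local \(x\)'s is a consequence of \eqref{eq:groupoid-relation-above}, \eqref{eq:groupoid-relation-below}, and \eqref{eq:groupoid-relation-thru} (see also \cref{fig:tangle-groupoid-relations}); the negative crossing, involving \(\mathcal{F}(w_1)\), \(\mathcal{F}(w_2)\), \(\mathcal{F}(w_{1'})\), is analogous.

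The main obstacle is bookkeeping rather than algebra: once each \(\mathcal{F}(w_i)\) is written over a common basepoint the manipulations are a few lines, but one must keep careful track of the segment orientations at the crossing, of which adjacent region is \(\upr i\) and which is \(\dnr i\), of the exponents \(\epsilon\) appearing in \eqref{eq:shadow-path-rule}, and of the directions of the generators \(x_i^\pm\), so that the \(s^+\)-prefixes really do cancel and the leftover identities are precisely (products of) the three crossing relations. It helps to keep the geometry in view as a check: \(\mathcal{F}(w_i)\) is the based loop that follows \(s^+_{\upr i}\) out to a point just above segment \(i\), encircles strand \(i\) once by passing over it and returning under it, and comes back — that is, a meridian of strand \(i\) conjugated to the basepoint \(0\). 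From this vantage the theorem is the familiar statement that the Wirtinger relators vanish in \(\pi_1\) of the complement, and the algebraic verification above is its combinatorial shadow in \(\gpoid{D}\). (Alternatively one could identify \(\gpoid{D}\) with \(\pi(D)\) via the groupoid van Kampen theorem and recognize \(\mathcal{F}\) as a section of the resulting projection, but we prefer to stay self-contained.)
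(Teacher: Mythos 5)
Your proposal is correct and follows essentially the same route as the paper: everything reduces to a local verification at each crossing that the images of the Wirtinger relators collapse via the three groupoid relations \eqref{eq:groupoid-relation-above}, \eqref{eq:groupoid-relation-below}, \eqref{eq:groupoid-relation-thru}, which the paper carries out explicitly for a positive crossing and leaves as ``similar'' for a negative one. Your explicit insistence on checking that \(\mathcal{F}(w_i)\) depends only on the arc containing \(i\) (e.g.\ \(\mathcal{F}(w_1)=\mathcal{F}(w_{1'})\) at a positive crossing) is a point the paper's proof leaves implicit, and it does follow from \eqref{eq:groupoid-relation-above} and \eqref{eq:groupoid-relation-thru} exactly as you indicate.
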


An example is given in in \cref{fig:path-factorization}.

\begin{proof}
   We need to check that \(\mathcal{F}\) respects the relations of \(\pi(D)\) coming from each crossing.
   Consider a positive crossing whose topmost region has over path \(s^+\) and whose segments are labeled as in \cref{fig:crossing-types}.
   We have
   \[
     \mathcal{F}(w_{2'})
     =
     s^+
     x_{2'}^{+}
     \left(x_{2'}^{-}\right)^{-1}
     \left(s^+\right)^{-1}
   \]
   while
   \begin{align*}
     &
     \mathcal{F}(w_{1}^{-1} w_{2} w_{1})
     \\
     &=
     \left[
     s^+
       x_1^- \left(x_1^+\right)^{-1}
     \left(s^+\right)^{-1}
     \right]
     s^+
     x_{1}^{+}
     x_2^+ \left(x_2^-\right)^{-1}
     \left(s^{+} x_1^{+}\right)^{-1}
     \left[
     s^+
       x_1^+ \left(x_1^-\right)^{-1}
     \left(s^+\right)^{-1}
     \right]
     \\
     &=
     s^+
     x_1^-
     x_2^+ \left(x_2^-\right)^{-1}
     \left(x_1^-\right)^{-1}
     \left(s^+\right)^{-1}
   \end{align*}
   and writing \eqref{eq:groupoid-relation-thru} and \eqref{eq:groupoid-relation-below} as
   \[
     x_{2'}^{+}
     =
     x_1^{-} x_2^{+} \left( x_{1'}^{-}\right)^{-1}
     \text{ and }
     \left(x_{2'}^{-}\right)^{-1}
     =
     x_{1'}^{-}
     \left( x_2^{-} \right)^{-1}
     \left( x_1^{-} \right)^{-1}
   \]
   gives the relation \(\mathcal{F}(w_{2'}) = \mathcal{F}(w_1^{-1} w_2 w_1)\).
   The relation at a negative crossing follows from a similar computation.
\end{proof}

\begin{figure}
  \centering
  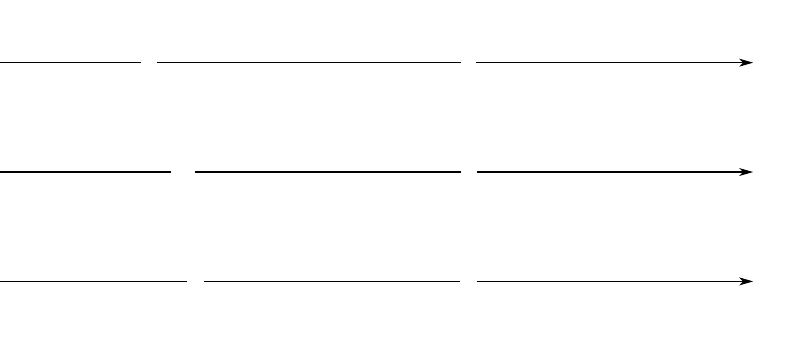
\caption{
  This tangle has regions \(0, 1, 2, 3\) and segments \(1, 2, 3\).
  The over path for region \(2 = \upr{3}\) is \(s^+ = x_1^+ x_2^+\), so the Wirtinger generator \(w_3 \in \pi(D)\) of segment \(3\) is mapped to \( \mathcal{F}(w_3) = s^+ x_3^+ (x_3^-)^{-1} (s^+)^{-1} = x_1^+ x_2^+ x_3^+ \left(x_3^- \right)^{-1} \left(x_2^+\right)^{-1} \left(x_1^+\right)^{-1} \in  \gpoid{D}.\)
}
  \label{fig:path-factorization}
\end{figure}

According to the theorem, an octahedral coloring \(i \mapsto \chi_i\) has the same holonomy as an  \(\slg\) representation \(\rho\) if for each segment \(i\)
\[
  \rho(w_i) = \Hol{\chi}(s_{\upr{i}}^+) \aroundhol{\chi_i} \Hol{\chi}(s_{\upr{i}}^+)^{-1}.
\]
Here \(\rho(w_i) = g_i\) is the image of the corresponding Wirtinger generator, i.e.\@ the color strand \(i\) in the associated \(\slg\)-coloring.
This gives an inductive method for finding \(\chi\).
For example, consider the tangle in \cref{fig:path-factorization} with segments \(1, 2, 3\) and regions \(0, 1, 2, 3\).
The octahedral color \(\chi_1\) of strand \(1\) is determined by
\[
  g_1 = \Hol{\chi}(s_0^+) \aroundhol{\chi_1} \Hol{\chi}(s_0^+)^{-1}
  = \aroundhol{\chi_1}
\]
as the over path \(s_0^+\) for the topmost region is trivial.
We can find such a \(\chi_1\) whenever \(g_1\) lies in the subset 
\[
  Y
  =
  \set{ 
    \begin{bmatrix}
      a & -(a-m) b \\
      (a-1/m)/b & m + m^{-1} -a
    \end{bmatrix}
    \given
    a, b, m \in \mathbb{C} \setminus \{0\}
  }
  \subset \slg
\]
of matrices of the form \eqref{eq:chi-around-hol}.
Moving down the diagram, \(\chi_2\) satisfies
\[
  g_2 = \Hol{\chi}(s_1^+) \aroundhol{\chi_2} \Hol{\chi}(s_1^+)^{-1}
  = \upf{\chi_1} \aroundhol{\chi_2} \upf{\chi_1}^{-1}
\]
and again this is possible when
\[
  \upf{\chi_1}^{-1} g_2 \upf{\chi_1} \in Y.
\]

More generally this method can be used to inductively find an octahedral coloring agreeing with \(\rho\);
this leads to the \defemph{generic biquandle factorizations} of of \textcite{Blanchet2018}.%
\note{
  In particular, octahedral colorings are closely related to the factorization of \(\slg\)  in \cite[Example 5.2]{Blanchet2018}.
  The connection to quantum groups in \cite[Section 6]{Blanchet2018} is discussed further in \cite{McPhailSnyder2022,McPhailSnyderAlgebra}.
}
However, it is not immediately clear for which \(\rho\) this is possible, because the condition that
\[
  \Hol{\chi}(s_{\upr{i}}^+)^{-1} g_i \Hol{\chi}(s_{\upr{i}}^+)
  \in Y
\]
requires already determining the earlier colors in the over path
\[
  \Hol{\chi}(s_{\upr{i}}^+) 
  =
  \upf{\chi_{i_1}} \cdots \upf{\chi_{i_k}}.
\]

In addition, this definition is asymmetric: the segments adjacent to the top region are treated differently because its over path is always trivial.
We can change this by modifying our notion of what it means for \(\Hol{\chi}\) to agree with \(\rho\).
Extend our earlier notation by writing
\[
  \upf{
    \begin{bmatrix}
      u^1 \\ u^2
    \end{bmatrix}
  }
  =
  \begin{bmatrix}
    u^{1} & 0 \\
    u^{2} & 1
  \end{bmatrix}
\]
and define
\[
  \hol{\chi, u_0}(w_i) = \upf{u_0} \Hol{\chi}(\mathcal{F}(w_i)) \upf{u_0}^{-1}.
\]
This makes sense as long as the first entry of \(u_0\) is nonzero, which it is in any admissible shadow coloring.
When \(u_0 = \evec{1}\) we simply have \(\hol{\chi, \evec{1}} = \Hol{\chi} \mathcal{F} \), while the other representations \(\hol{\chi, u_0}\) are conjugate by the matrices \(\upf{u_0}\).
We now rephrase our problem as asking whether \(\rho\) agrees with some \(\hol{\chi, u_0}\).
This does not fundamentally change the problem because we are mostly interested in whether \(\rho\) is \emph{conjugate} to the holonomy of an octahedral coloring.

\begin{bigtheorem}
  \label{thm:holonomies-match}
  Let \(D\) be a tangle diagram and \(i \mapsto g_i, j \mapsto u_j\) be a decorated shadow \(\slg\)-coloring of \(D\) with \(u_0\) the shadow of the topmost region.
  Let \(\chi\) be the associated octahedral coloring of \cref{def:associated-octahedral}.
  Then its holonomy agrees with \(\rho\) in the sense that
  \[
    \hol{\chi, u_0}(w_i) = \rho(w_i)
  \]
  for each Wirtinger generator \(w_i\).
\end{bigtheorem}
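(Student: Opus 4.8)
The plan is to factor the proof through two local identities: one rewriting the ``over path'' holonomies $\Hol{\chi}(s^+_j)$ in terms of the shadow colors, and one exhibiting $\aroundhol{\chi_i}$ as a conjugate of $g_i = \rho(w_i)$. Throughout I use $\varphi^+$ extended to column vectors as in the text, so that $\upf{u}$ is the matrix with columns $u$ and $\evec{2}$; it lies in $\glg$ exactly when $\det(u,\evec{2})\neq 0$, i.e.\ under the first admissibility hypothesis.

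The first and main step is the lemma that
\[
  \upf{u_0}\,\Hol{\chi}(s^+_j) = \upf{u_j}
  \qquad\text{for every region $j$ of $D$.}
\]
Since $\upf{u_0}$ and every matrix $\upf{\chi_i}^{\pm 1}$ is lower triangular with $(1,2)$-entry $0$ and $(2,2)$-entry $1$, so is the product on the left; hence it equals $\upf{w_j}$ for $w_j$ its first column, and it is enough to show $w_j = u_j$. Since $\Hol{\chi}(s^+_j)$ is well defined (both $s^+_j$ and the functor $\Hol{\chi}$ are), I would argue by induction along a spanning tree of the dual graph of $D$, the base case being $s^+_0 = \id$. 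Using \cref{eq:shadow-path-rule} to write $s^+_j$ as $s^+_{j'}$ times a single generator $(x^+_i)^{\pm 1}$, the inductive step reduces to the single identity
\[
  \upf{u_{\upr{i}}}\,\upf{\chi_i} = \upf{u_{\dnr{i}}}
  \qquad\text{for every segment $i$,}
\]
which, comparing first columns and using $u_{\dnr{i}} = g_i u_{\upr{i}}$, is equivalent to
\[
  g_i\, u_{\upr{i}} = a_i\, u_{\upr{i}} + \frac{a_i - 1/m_i}{b_i}\,\evec{2}.
\]
I would verify this last equation by pairing both sides with the two covectors $x\mapsto v_i x$ and $x\mapsto\det(x,\evec{2})$: the first pairing uses $v_i g_i = m_i^{-1} v_i$ together with $b_i\,(v_i u_{\upr{i}}) = -\,v_i\evec{2}$, the second is immediate from the definition of $a_i$, and the two covectors are linearly independent precisely because $v_i\evec{2}\neq 0$ by admissibility.

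The second step is the local conjugation identity $\upf{u_{\upr{i}}}\,\aroundhol{\chi_i}\,\upf{u_{\upr{i}}}^{-1} = g_i$, equivalently $\aroundhol{\chi_i} = \upf{u_{\upr{i}}}^{-1}\,g_i\,\upf{u_{\upr{i}}}$. I would show the two sides agree by noting that: (i) both have determinant $1$; (ii) both send $\evec{1}$ to the column vector $\bigl(a_i,\ (a_i - 1/m_i)/b_i\bigr)^{\transp}$ --- for the right-hand side this follows from the displayed equation above upon applying $\upf{u_{\upr{i}}}^{-1}$; and (iii) both have the row vector $(1,-b_i)$ as a left eigenvector of eigenvalue $m_i^{-1}$ --- for $\aroundhol{\chi_i}$ by a direct computation from \eqref{eq:chi-around-hol}, and for $\upf{u_{\upr{i}}}^{-1}g_i\upf{u_{\upr{i}}}$ because $(1,-b_i)$ is proportional to $v_i\,\upf{u_{\upr{i}}}$, again using $v_i u_{\upr{i}}\neq 0$. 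A short argument shows (i)--(iii) determine a $2\times 2$ matrix: the difference $C$ of the two sides has vanishing first column and is annihilated on the left by $(1,-b_i)$, so its first column is $0$ and its second is a scalar multiple of $(b_i,1)^{\transp}$, whereupon (i) forces that multiple to be $0$. This is uniform in whether $g_i$ is diagonalizable, parabolic, or $\pm I$.

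Finally I would assemble the pieces. Since $\mathcal F$ and $\Hol{\chi}$ are functors and $\aroundhol{\chi_i} = \upf{\chi_i}\dnf{\chi_i}^{-1} = \Hol{\chi}(x^+_i)\Hol{\chi}(x^-_i)^{-1}$, we get $\Hol{\chi}(\mathcal F(w_i)) = \Hol{\chi}(s^+_{\upr{i}})\,\aroundhol{\chi_i}\,\Hol{\chi}(s^+_{\upr{i}})^{-1}$, and hence
\[
  \hol{\chi,u_0}(w_i)
  = \upf{u_0}\,\Hol{\chi}(\mathcal F(w_i))\,\upf{u_0}^{-1}
  = \upf{u_{\upr{i}}}\,\aroundhol{\chi_i}\,\upf{u_{\upr{i}}}^{-1}
  = g_i = \rho(w_i),
\]
the middle equality being the key lemma applied to the region $\upr{i}$. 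I expect the main obstacle to be not any single matrix computation --- each is short once the frame is chosen --- but the bookkeeping in the key lemma: matching the generators $x^{\pm}_i$ and the orientation signs of \cref{eq:shadow-path-rule} to the shadow rule $u_{\dnr{i}} = g_i u_{\upr{i}}$, and checking that the inductive construction is independent of the chosen spanning tree.
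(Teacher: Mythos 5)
Your proposal is correct and follows essentially the same route as the paper: the same key lemma \(\upf{u_0}\Hol{\chi}(s_j^+) = \upf{u_j}\), proved by induction over adjacent regions, combined with the local conjugation identity \(\upf{u_{\upr{i}}}\aroundhol{\chi_i}\upf{u_{\upr{i}}}^{-1} = g_i\). The only real difference is that you prove that local identity in the direction actually needed (the formula of \cref{def:associated-octahedral} implies the conjugation identity), whereas the paper's \cref{thm:shadow-lemma} states the converse and the published proof implicitly inverts it; your covector-pairing and rigidity arguments supply exactly the details that inversion requires, so if anything your version is the more airtight of the two.
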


We begin with some matrix computations that reduce the proof to showing that the shadow coloring keeps track of the over path holonomies.
\begin{lemma}
  \label{thm:shadow-lemma}
  Suppose 
  \[
    \upf{u} \aroundhol{\chi} \upf{u}^{-1} = g.
  \]
  for an octahedral color \(\chi\), a column vector \(u \in \mathbb{C}^{2}\), and a matrix \(g \in \slg\).
  Then
  \begin{equation*}
    \chi =
    \left(
      \frac{
        \det( g u , \evec{2} )
      }{
        \det( u , \evec{2} )
      }
      ,
      -
      \frac{
        v \evec{2}
      }{
        v u
      }
      ,
      m
    \right)
  \end{equation*}
  where \(v\) is a nonzero \emph{row} vector with \(vg = m^{-1} v\).
\end{lemma}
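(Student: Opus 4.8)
The plan is to read each component of $\chi = (a, b, m)$ off the relation $g = \upf{u}\,\aroundhol{\chi}\,\upf{u}^{-1}$ by a short $2\times 2$ computation. Write $u = u^1 \evec{1} + u^2 \evec{2}$. Everything rests on three elementary properties of $\upf{u}$: it is lower triangular with $\det \upf{u} = u^1$ (so that the invertibility of $\upf{u}$ implicit in the hypothesis forces $u^1 = \det(u, \evec{2}) \neq 0$); it satisfies $\upf{u}\,\evec{1} = u$ and $\upf{u}\,\evec{2} = \evec{2}$; and for a row vector $w = (w_1, w_2)$ one has $w\,\upf{u} = (w u,\; w \evec{2})$.

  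First I would extract $a$. From $\upf{u}\,\evec{1} = u$ we get $\upf{u}^{-1} u = \evec{1}$, so $g u = \upf{u}\,\aroundhol{\chi}\,\evec{1}$; that is, $g u$ is $\upf{u}$ applied to the first column $\bigl(a,\ (a - 1/m)/b\bigr)^{\transp}$ of $\aroundhol{\chi}$ (read off from \eqref{eq:chi-around-hol}). Using $\upf{u}\,\evec{2} = \evec{2}$ and multiplicativity of the determinant,
  \begin{gather*}
    \det(g u,\, \evec{2}) = \det(\upf{u})\, \det\!\left( \begin{bmatrix} a \\ (a - 1/m)/b \end{bmatrix},\ \evec{2} \right) = a\, u^1,
    \\
    \det(u,\, \evec{2}) = \det(\upf{u})\, \det(\evec{1},\, \evec{2}) = u^1,
  \end{gather*}
  so the ratio is $a$, matching the first slot of the claimed formula.

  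Next I would get $b$ (and check the $m$-slot). Since $\tr \aroundhol{\chi} = m + m^{-1}$ and $\det \aroundhol{\chi} = 1$, the matrix $\aroundhol{\chi}$ — hence also its conjugate $g$ — has eigenvalues $m^{\pm 1}$, so the condition $v g = m^{-1} v$ is consistent and already records the $m$-slot. A direct computation with \eqref{eq:chi-around-hol} shows $(-1, b)\,\aroundhol{\chi} = m^{-1}(-1, b)$, so $(-1, b)$ spans the left $m^{-1}$-eigenspace of $\aroundhol{\chi}$. On the other hand $v g = m^{-1} v$ and $g = \upf{u}\,\aroundhol{\chi}\,\upf{u}^{-1}$ give $(v\,\upf{u})\,\aroundhol{\chi} = m^{-1}(v\,\upf{u})$, so $v\,\upf{u}$ is a scalar multiple of $(-1, b)$. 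Writing $v\,\upf{u} = (v u,\ v \evec{2})$ and comparing entries, $v u = -\lambda$ and $v \evec{2} = \lambda b$ for some $\lambda \neq 0$, hence $b = -v\evec{2}/v u$ (and in particular $v u \neq 0$, since $b \neq 0$).

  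I do not expect a real obstacle: each step is a routine manipulation of explicit $2 \times 2$ matrices. The only caveat is that the left $m^{-1}$-eigenspace of $\aroundhol{\chi}$ is one-dimensional exactly away from the locus $\aroundhol{\chi} = \pm 1$ (equivalently $a = m \in \{\pm 1\}$, where $g = \pm 1$ and $v$ is undetermined); on that locus $b$ genuinely cannot be recovered from $g$ and $v$, but it does not arise for the admissible shadow colorings to which the lemma is applied.
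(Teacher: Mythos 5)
Your proof is correct and follows essentially the same route as the paper's: you extract \(b\) by noting that \((-1,b)\) spans the left \(m^{-1}\)-eigenspace of \(\aroundhol{\chi}\) so that \(v\upf{u} = (vu,\ v\evec{2})\) must be proportional to it, and you extract \(a\) by pairing \(gu = \upf{u}\aroundhol{\chi}\evec{1}\) against \(\evec{2}\) (the paper phrases this with \(\evec{1}^{\transp}\) and the factorization \(\aroundhol{\chi}=\upf{\chi}\dnf{\chi}^{-1}\) rather than multiplicativity of the determinant, but the computation is identical). Your closing caveat about the degenerate locus \(\aroundhol{\chi}=\pm 1\) is a fair point that the paper only addresses with a brief parenthetical.
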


\begin{proof}
  Write \(\chi = (a,b, m)\).
  By \cref{eq:chi-around-hol} we have
  \[
    \begin{bmatrix}
      -1 & b
    \end{bmatrix}
    \aroundhol{\chi}
    =
    m^{-1}
    \begin{bmatrix}
      -1 & b
    \end{bmatrix}
  \]
  so \(
    \begin{bmatrix}
      -1 & b
    \end{bmatrix}
  \)
  spans the \(m^{-1}\)-eigenspace of \(\aroundhol{\chi}\).
  (When \(m = \pm 1\) it spans the single eigenspace of \(\tau{\chi}\).)
  Since \(v\) spans the \(m^{-1}\) eigenspace of \(g = \upf{u}^{-1} \aroundhol{\chi} \upf{u}\) we conclude that
  \[
    \alpha
    \begin{bmatrix}
      -1 & b
    \end{bmatrix}
    =
    v
    \upf{u}
    =
    \begin{bmatrix}
      v u & v \evec{2}
    \end{bmatrix}
  \]
  for some nonzero \(\alpha \in \mathbb{C}\), which implies
  \[
    b = -
    \frac{
      v \evec{2}
      }{
      v u
    }
  \]
  as claimed.
  It remains to establish the formula for \(a\).
  Set
  \[
    u = \begin{bmatrix}
      u^1 \\ u^2
    \end{bmatrix}.
  \]
  As for any column vector \(\det(u, \evec{2}) = u^1 = \evec{1}^{\transp} u\).
  In addition,
  \(
    \upf{u}^{-1} u =  \evec{1}
  \)
  because
  \(
    \upf{u} = \begin{bmatrix}
      u & \evec{2}
    \end{bmatrix}
  \)
  and \(\dnf{\chi} \evec{1} = \evec{1}\) because \(\dnf{\chi}\) is lower-triangular.
  We now compute
  \begin{align*}
    \det(u, \evec{2}) 
    &=
    \evec{1}^{\transp}
    g u
    \\
    &=
    \evec{1}^{\transp}
    \upf{u} \upf{\chi} \dnf{\chi} \upf{u}^{-1} u 
    \\
    &=
    \evec{1}^{\transp}
    \upf{u} \upf{\chi} \dnf{\chi} \evec{1}
    \\
    &=
    \evec{1}^{\transp}
    \upf{u} \upf{\chi} \evec{1}
    \\
    &=
    \begin{bmatrix}
      u^1 & 0 
    \end{bmatrix}
    \begin{bmatrix}
      a \\
      (a-1/m)/b
    \end{bmatrix}
    \\
    &= u^1 a.
  \end{align*}
\end{proof}

\begin{proof}[Proof of \cref{thm:holonomies-match}]
  Our goal is to show that for each segment \(i\),
  \[
    \hol{\chi, u_0}(w_i)
    =
    \upf{u_0} \Hol{\chi}(s_{\upr{i}}^+)
    \aroundhol{\chi_i}
    \left( \upf{u_0} \Hol{\chi}(s_{\upr{i}}^+) \right)^{-1}
    =
    g_i.
  \]
  By \cref{thm:shadow-lemma} it is sufficient to check that
  \begin{equation}
    \label{eq:shadows-match-over}
    \upf{u_j} = \upf{u_0} \Hol{\chi}(s_j^+) 
  \end{equation}
  for each region \(j\).

  We prove this inductively.
  For the topmost region \(0\), \(s_0^+ = 1\) is trivial and \cref{eq:shadows-match-over} is tautological.
  Below we show that if \cref{eq:shadows-match-over} holds for one region then it holds for all adjacent regions, which establishes the theorem.

  Suppose
  \[
    \upf{u_j} = \upf{u_0} \Hol{\chi}(s_j^+) 
  \]
  for a region \(j\) and let  \(j'\) be adjacent across segment \(i\).
  For notational simplicity we assume \(j'\) is below \(j\) (as in \cref{fig:region-orientation-rule}) but a similar argument works for the other orientation.
  By definition,
  \(
    u_{j'} = g_i u_{j}
  \)
  so by the inductive hypothesis
  \[
    u_{j'}
    =
    \upf{u_j} \aroundhol{\chi_i} \upf{u_j}^{-1} u_{j}.
  \]
  Then as in the proof of \cref{thm:shadow-lemma}
  \[
    u_{j'}
    =
    \upf{u_j} \aroundhol{\chi_i} \upf{u_j}^{-1} u_{j}
    =
    \upf{u_j} \upf{\chi_i} \dnf{\chi_i} \evec{1}
    =
    \upf{u_j} \upf{\chi_i} \evec{1}.
  \]
  Now by the inductive hypothesis and the definition of over path
  \[
    \upf{u_j} \upf{\chi_i}
    =
    \upf{u_0} \Hol{\chi}(s_j^+) \upf{\chi_i}
    =
    \upf{u_0} \Hol{\chi}(s_{j'}^+) 
  \]
  so
  \[
    u_{j'}
    =
    \upf{u_0} \Hol{\chi}(s_{j'}^+) \evec{1}
  \]
  and because \(\upf{u} \evec{1} = u\) we conclude \(\upf{u_{j'}} = \upf{u_0} \Hol{\chi}(s_{j'}^+)\) as claimed.
\end{proof}

\section{Gauge transformations}
\label{sec:gauge-transformations}

In this section we discuss how gauge transformation (conjugation) of the representation \(\rho\) interacts with the decorations and shadows.
In this context it is natural to break gauge transformations into two separate types.
We then use gauge transformations to prove that every decorated representation is gauge-equivalent to an admissible one (i.e.\@ one that is the holonomy of an octahedral coloring).

\begin{definition}
  Let \(i \mapsto (g_i, L_i)\), \(j \mapsto u_j\) be a decorated shadow \(\slg\) coloring of a tangle diagram \(D\).
  A \defemph{gauge transformation} of it is a coloring of the form
  \begin{itemize}
    \item[(A)] \(i \mapsto (h^{-1} g_i h, L_i h)\) and \(j \mapsto g^{-1} u_j\)
    \item[(B)] \(i \mapsto (g_i, L_i)\) and \(j \mapsto h^{-1} u_j\)
  \end{itemize}
  for some \(h \in \slg\).
  We say the new colorings are the images of \defemph{type (A)} and  \defemph{type (B)} gauge transformations  \defemph{by \(h\)}.
\end{definition}
It is not hard to see that the gauge transformation of a coloring is still a coloring and that the corresponding representation \(\rho : \pi(D) \to \slg\) is conjugated under a type (A) transformation and unchanged under type (B).
We can use type (B) transformations to choose the shadow of a single region arbitrarily as in \cref{thm:shadow-colors-free}.

\begin{figure}
  \subcaptionbox{ \(D'\) is obtained from \(D\) by a type (A) gauge transformation by \(h\).}{
\begingroup%
  \makeatletter%
  \providecommand\color[2][]{%
    \errmessage{(Inkscape) Color is used for the text in Inkscape, but the package 'color.sty' is not loaded}%
    \renewcommand\color[2][]{}%
  }%
  \providecommand\transparent[1]{%
    \errmessage{(Inkscape) Transparency is used (non-zero) for the text in Inkscape, but the package 'transparent.sty' is not loaded}%
    \renewcommand\transparent[1]{}%
  }%
  \providecommand\rotatebox[2]{#2}%
  \newcommand*\fsize{\dimexpr\f@size pt\relax}%
  \newcommand*\lineheight[1]{\fontsize{\fsize}{#1\fsize}\selectfont}%
  \ifx\svgwidth\undefined%
    \setlength{\unitlength}{391.81884384bp}%
    \ifx\svgscale\undefined%
      \relax%
    \else%
      \setlength{\unitlength}{\unitlength * \real{\svgscale}}%
    \fi%
  \else%
    \setlength{\unitlength}{\svgwidth}%
  \fi%
  \global\let\svgwidth\undefined%
  \global\let\svgscale\undefined%
  \makeatother%
  \begin{picture}(1,0.20594366)%
    \lineheight{1}%
    \setlength\tabcolsep{0pt}%
    \put(0,0){\includegraphics[width=\unitlength,page=1]{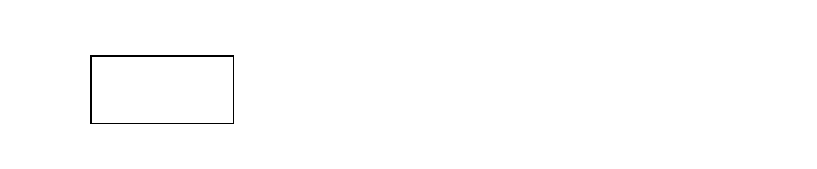}}%
    \put(0.18945716,0.09060661){\makebox(0,0)[lt]{\lineheight{1.25}\smash{\begin{tabular}[t]{l}$D$\end{tabular}}}}%
    \put(0,0){\includegraphics[width=\unitlength,page=2]{gauge-transformation-A.pdf}}%
    \put(0.40528314,0.1826748){\makebox(0,0)[lt]{\lineheight{1.25}\smash{\begin{tabular}[t]{l}$h$\end{tabular}}}}%
    \put(0,0){\includegraphics[width=\unitlength,page=3]{gauge-transformation-A.pdf}}%
    \put(0.72968021,0.09060661){\makebox(0,0)[lt]{\lineheight{1.25}\smash{\begin{tabular}[t]{l}$D'$\end{tabular}}}}%
    \put(0,0){\includegraphics[width=\unitlength,page=4]{gauge-transformation-A.pdf}}%
    \put(0.9441607,0.1826715){\makebox(0,0)[lt]{\lineheight{1.25}\smash{\begin{tabular}[t]{l}$h$\end{tabular}}}}%
    \put(0,0){\includegraphics[width=\unitlength,page=5]{gauge-transformation-A.pdf}}%
  \end{picture}%
\endgroup%
}
  \subcaptionbox{ \(D''\) is obtained from \(D\) by a type (B) gauge transformation by \(h\).}{
\begingroup%
  \makeatletter%
  \providecommand\color[2][]{%
    \errmessage{(Inkscape) Color is used for the text in Inkscape, but the package 'color.sty' is not loaded}%
    \renewcommand\color[2][]{}%
  }%
  \providecommand\transparent[1]{%
    \errmessage{(Inkscape) Transparency is used (non-zero) for the text in Inkscape, but the package 'transparent.sty' is not loaded}%
    \renewcommand\transparent[1]{}%
  }%
  \providecommand\rotatebox[2]{#2}%
  \newcommand*\fsize{\dimexpr\f@size pt\relax}%
  \newcommand*\lineheight[1]{\fontsize{\fsize}{#1\fsize}\selectfont}%
  \ifx\svgwidth\undefined%
    \setlength{\unitlength}{391.81884384bp}%
    \ifx\svgscale\undefined%
      \relax%
    \else%
      \setlength{\unitlength}{\unitlength * \real{\svgscale}}%
    \fi%
  \else%
    \setlength{\unitlength}{\svgwidth}%
  \fi%
  \global\let\svgwidth\undefined%
  \global\let\svgscale\undefined%
  \makeatother%
  \begin{picture}(1,0.20594366)%
    \lineheight{1}%
    \setlength\tabcolsep{0pt}%
    \put(0,0){\includegraphics[width=\unitlength,page=1]{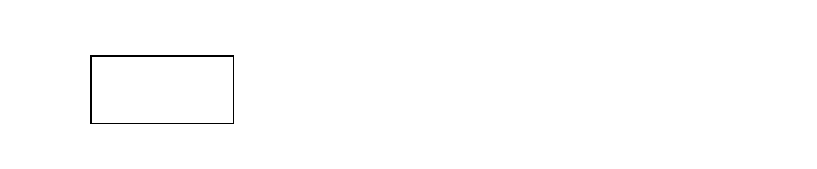}}%
    \put(0.18945716,0.09060661){\makebox(0,0)[lt]{\lineheight{1.25}\smash{\begin{tabular}[t]{l}$D$\end{tabular}}}}%
    \put(0,0){\includegraphics[width=\unitlength,page=2]{gauge-transformation-B.pdf}}%
    \put(0.40528314,0.1826748){\makebox(0,0)[lt]{\lineheight{1.25}\smash{\begin{tabular}[t]{l}$h$\end{tabular}}}}%
    \put(0,0){\includegraphics[width=\unitlength,page=3]{gauge-transformation-B.pdf}}%
    \put(0.72968021,0.09060661){\makebox(0,0)[lt]{\lineheight{1.25}\smash{\begin{tabular}[t]{l}$D''$\end{tabular}}}}%
    \put(0,0){\includegraphics[width=\unitlength,page=4]{gauge-transformation-B.pdf}}%
    \put(0.9441607,0.1826715){\makebox(0,0)[lt]{\lineheight{1.25}\smash{\begin{tabular}[t]{l}$h$\end{tabular}}}}%
    \put(0,0){\includegraphics[width=\unitlength,page=5]{gauge-transformation-B.pdf}}%
  \end{picture}%
\endgroup%
}
  \caption{
    Graphical description of gauge transformations.
    The thick strands represent bundles of incoming and outgoing segments (possibly empty) with arbitrary colorings.
  }
  \label{fig:gauge-transformation}
\end{figure}

The letters A and B are chosen because a type (A) transformation comes from pulling a strand colored by \(h\) across and \textbf{A}bove the diagram, while for type (B) we pull across and \textbf{B}elow.
Both are shown in \cref{fig:gauge-transformation}.
These are quite similar to the diagrammatic gauge transformations of \textcite[Section 3.3]{Blanchet2018}.
Combinatorially they come from Reidemeister moves for shadow colored diagrams.

Let \(i \mapsto \chi_i\) be the octahedral coloring associated to a shadow coloring.
We can immediately compute that under a type (A) transformation by \(h\) the new associated coloring is
\begin{equation}
  \label{eq:type-A-gauge}
  i \mapsto
  \tilde{\chi}_i
  =
  \left(
    \frac{
      \det( h^{-1} u_{\dnr{i}} , \evec{2} )
    }{
      \det( h^{-1} u_{\upr{i}} , \evec{2} )
    }
    ,
    -
    \frac{
      v_i h \evec{2}
    }{
      v_i h h^{-1} u_j
    }
    ,
    m_i
  \right)
  =
  \left(
    \frac{
      \det( u_{\dnr{i}} , h \evec{2} )
    }{
      \det( u_{\upr{i}} , h \evec{2} )
    }
    ,
    -
    \frac{
      v_i h \evec{2}
    }{
      v_i u_{\upr{i}}
    }
    ,
    m_i
  \right)
\end{equation}
while under type (B) it is
\begin{equation}
  \label{eq:type-B-gauge}
  i \mapsto
  \tilde{\chi}_i
  =
  \left(
    \frac{
      \det( h^{-1} u_{\dnr{i}} , \evec{2} )
    }{
      \det( h^{-1} u_{\upr{i}} , \evec{2} )
    }
    ,
    -
    \frac{
      v_i \evec{2}
    }{
      v_i  h^{-1} u_j 
    }
    ,
    m_i
  \right)
  =
  \left(
    \frac{
      \det( u_{\dnr{i}} , h \evec{2} )
    }{
      \det( u_{\upr{i}} , h \evec{2} )
    }
    ,
    -
    \frac{
      v_i \evec{2}
    }{
      v_i  h^{-1} u_{\upr{i}}
    }
    ,
    m_i
  \right)
\end{equation}

\begin{bigtheorem}
  \label{thm:gauge-equivalent-to-admissible}
  Let \(D\) be any tangle diagram and \(\rho : \pi(D) \to \slg\) any decorated representation.
  Then \(\rho\) is gauge-equivalent to an admissible representation, in the sense that there is a conjugate \(h^{-1} \rho h\) of \(\rho\) that gives an admissible decorated shadow coloring of \(D\).
\end{bigtheorem}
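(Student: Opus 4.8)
The plan is to reach an admissible coloring by composing two gauge transformations of complementary types, both chosen generically. Admissibility of a decorated shadow coloring $(D,g,u)$ is the conjunction of finitely many non-vanishing conditions — $\det(u_j,\evec{2})\neq 0$ for every region $j$, and $v_i\evec{2}\neq 0$ and $v_i u_{\upr{i}}\neq 0$ for every segment $i$ — so the first thing I would record is how each of these three families behaves under the two kinds of gauge moves. Reading directly off \eqref{eq:type-A-gauge} and \eqref{eq:type-B-gauge}: the quantity $v_i u_{\upr{i}}$ is untouched by type (A) transformations (its $h$ does not enter), the quantity $v_i\evec{2}$ is untouched by type (B) transformations, and the quantities $\det(u_j,\evec{2})$ can be changed by either. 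This dictates the order of operations: I would first remove the type-(A)-invariant obstruction with a type (B) move, and only afterwards fix the remaining conditions with a type (A) move, which the invariance guarantees will not undo what was already arranged.

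Concretely, starting from an arbitrary decorated shadow coloring of $\rho$ (one exists by \cref{thm:shadow-colors-free}): in Stage 1, I apply a type (B) transformation by some $h\in\slg$ so that $v_i\, h^{-1}u_{\upr{i}}\neq 0$ holds for all segments $i$ at once. For each fixed $i$ the set of bad $h$ is the zero locus of a regular function on $\slg$, and that function is not identically zero because $\slg$ acts transitively on $\mathbb{C}^2\setminus\{0\}$ while the nonzero covector $v_i$ does not annihilate every vector; since $\slg$ is irreducible, the union of these finitely many proper closed subsets is still proper, and I choose $h$ in its complement. In Stage 2, I apply a type (A) transformation by an $h'\in\slg$ for which the column $h'\evec{2}$ spans a generic line in $\mathbb{C}^2$ (such $h'$ exists, again by transitivity). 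Using $\det((h')^{-1}u_j,\evec{2})=\det(u_j,h'\evec{2})$ and $v_i h'\evec{2}=v_i(h'\evec{2})$, the conditions $\det(u_j,\evec{2})\neq 0$ and $v_i\evec{2}\neq 0$ become the requirement that $h'\evec{2}$ avoid the finitely many lines spanned by the shadows $u_j$ (already transformed in Stage 1) and the finitely many lines $\ker v_i$; a generic direction clears all of these. Meanwhile the type-(A)-invariance of $v_i u_{\upr{i}}$ — explicitly $(v_i h')\big((h')^{-1}h^{-1}u_{\upr{i}}\big)=v_i h^{-1}u_{\upr{i}}$ — shows Stage 1 survives, so the composite coloring is admissible; since the type (A) move conjugates $\rho$ by $h'$, this exhibits the required conjugate $(h')^{-1}\rho\, h'$.

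The only calculations involved are the two elementary reformulations above and, for each family, the observation that the relevant non-vanishing condition is attainable — and each of those reduces to transitivity of the $\slg$-action on $\mathbb{C}^2\setminus\{0\}$ together with the fact that a single nonzero vector or covector cuts out a proper subspace. Accordingly the main point to be careful about is not any one computation but the bookkeeping of which condition is invariant under which gauge type: admissibility cannot in general be reached by conjugation alone, precisely because $v_i u_{\upr{i}}$ is conjugation-invariant, which is exactly why a type (B) adjustment of the shadows is needed and must come first. If one preferred a more uniform packaging, one could instead parametrize all choices simultaneously by a pair $(h,u_0)\in\slg\times(\mathbb{C}^2\setminus\{0\})$ — a conjugating matrix together with the shadow of the topmost region, which by \cref{thm:shadow-colors-free} determines the rest — note that this parameter space is irreducible, and check that each admissibility inequality defines a proper closed subvariety (the topmost region, whose over path is trivial, serving as the base case of the induction on regions); any point off the finite union of these subvarieties then gives the conclusion.
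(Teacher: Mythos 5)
Your proof is correct and is essentially the paper's argument: the paper likewise composes a type (A) and a type (B) transformation and observes that each admissibility failure cuts out a proper Zariski-closed subset, so a generic choice clears all finitely many of them at once. The only difference is organizational --- the paper picks the pair $(A,B)\in\slg\times\slg$ simultaneously and intersects dense open subsets of that irreducible product, whereas you choose the two transformations sequentially and use the type-(A)-invariance of $v_i u_{\upr{i}}$ to guarantee the second stage does not undo the first; both versions are valid.
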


Another way to state this theorem is to consider the holonomy as a map from the space of octahedral colorings of \(D\) to the space of decorated \(\slg\) representations of \(D\).
This map is not surjective because there are inadmissible shadow colorings, but it is surjective up to gauge in the sense that the image intersects the orbit of any \(\rho\).

\begin{proof}
  Fix a shadow coloring \(i \mapsto (g_i, v_i), j \mapsto u_j\) of \(D\).
  We need to show that some gauge transformation of it is admissible.
  The coloring obtained by applying a type (A) gauge transformation by \(A\) then a type (B) by \(B^{-1}\) is admissible when
  \[
    \det(B u_j , A \evec{2} ), v_i A \evec{2}, v_i B u_{i^{\uparrow}} \ne 0
  \]
  for every region \(j\) and segment \(i\), where as usual the \(v_i\) are representatives of the eigenspaces \(L_i\).
  Thinking of the \(u_j\) and \(v_i\) as fixed we need to show that there are \(A, B \in \slg\) for which this condition holds.

  \(V = \slg \times \slg\) is an algebraic set in \(\mathbb{C}^{8}\).
  For each region \(j\) write
  \[
    X_1(j) = \set{(A,B) \in V | \det(A u_j , B \evec{2}) = 0 }
  \]
  and for each segment \(i\) write
  \[
    \begin{gathered}
      X_2(i) = \set{(A,B) \in V | v_i A \evec{2} = 0 }
    \\
    X_3(i) = \set{(A,B) \in V | v_i B u_{i^{\uparrow}} = 0 }
    .
    \end{gathered}
  \]
  These are all Zariki closed proper subsets of \(V\).
  Our goal is to prove that the union
  \[
    X =
    \bigcup_{\text{regions } j} X_1(j)
    \cup
    \bigcup_{\text{segments } i} X_2(i) \cup X_3(i)
  \]
  is not all of \(\slg \times \slg\), or equivalently that
  \[
    V \setminus X = 
    \bigcap_{\text{regions } j} U_1(j)
    \cap
    \bigcap_{\text{segments } i} U_2(i) \cap U_3(i)
  \]
  is nonempty, where \(U_l(k) = V \setminus X_l(k)\).
  Each complement \(U_l(k)\) is Zariski open and dense.
  A finite intersection of such sets is also Zariski open and dense, and in particular is nonempty.
\end{proof}

\section{Shape parameters and the potential function}
\label{sec:potential}

One way to subdivide the octahedral decomposition is to split the octahedron at each crossing into four tetrahedra as in \cref{fig:shape-parameters-diagram}, with one for each region touching the crossing.
We can geometrize the tetrahedra by placing their vertices in the boundary at infinity of hyperbolic space, i.e. \(\psp\).
Such geometric ideal tetrahedra are determined up to congruence by the cross-ratio of their vertices, usually called the \defemph{shape parameter}.
In this case these parameters are given by \cref{eq:b-shapes}.%
\note{
  For details see \cite{McPhailSnyder2022}.
  Note that we are using a different convention on negatively oriented tetrahedra.
}
When a shape parameter is \(0\), \(1\), or \(\infty\) it represents a geometrically degenerate tetrahedron whose vertices coincide.
\cref{thm:gauge-equivalent-to-admissible} says that for a given representation we can always avoid tetrahedra with shape \(0\) or \(\infty\).
In this section we study when tetrahedra with shape \(1\) occur.

\begin{figure}
  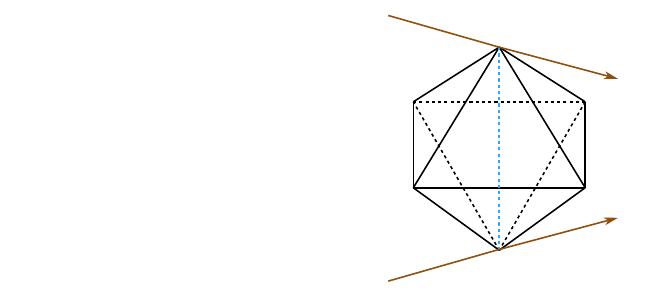
  \caption{%
    One can divide the octahedron at a crossing into four tetrahedra, one for each region touching the crossing.
    The shape parameters are then given as ratios of the \(b\) and \(m\)-coordinates, as in \cref{eq:b-shapes}.
  }
  \label{fig:shape-parameters-diagram}
\end{figure}

\begin{definition}
  An octahedral coloring of a crossing is \defemph{pinched}%
  \note{%
    This term is due to \textcite{Kim2016}.
    At a pinched crossing the ideal points of the tetrahedra are ``pinched'' together.
  }
  if any of the shape parameters
  \begin{equation}
    \label{eq:b-shapes}
    z_N
    =
    \frac{
      b_{2'}  
      }{
      b_1   
    }
    ,
    z_W
    =
    \frac{
      b_2
      }{
      m_1 b_1
    }
    ,
    z_S
    =
    \frac{
      m_2 b_2 
      }{
      m_1 b_{1'}
    }
    ,
    z_E
    =
    \frac{
      m_2 b_{2'}
      }{
      b_{1'}
    }
  \end{equation}
  are \(1\).
  (These quantities can never be \(0\) or \(\infty\) as part of the definition of octahedral coloring.)
  An octahedral coloring of a diagram is pinched if any of its crossings are.
\end{definition}

We can use \cref{def:associated-octahedral} to directly find the shape parameters of the octahedral coloring associated to a decorated representation.
As a corollary we see that being pinched is a gauge-invariant property.

\begin{theorem}
  \label{thm:shape-formula}
  Let \(D\) be a tangle diagram with an admissible decorated shadow \(\slg\) coloring \(i \mapsto (g_i, L_i), j \mapsto u_j\) and let \(c\) be a crossing of \(D\).
  \begin{thmenum}
    \item
    \label{thm:shape-formula:formula}
      The shape parameters of the crossing  are given by
      \begin{align}
        \label{eq:zN-formula}
        z_N
      &=
      \frac{
        v_1 u_N
        }{
        v_1 \evec{2}
      }
      \frac{
        v_{2'} \evec{2}
        }{
        v_{2'} u_N
      }
      =
      \frac{
        h(v_{1} \upf{u_N})
        }{
        h(v_{2'} \upf{u_N})
      }
      \\
      \label{eq:zW-formula}
      z_W
      &=
      \frac{
        v_{1} u_W
        }{
        v_{1} \evec{2}
      }
      \frac{
        v_{2} \evec{2}
        }{
        v_{2} u_W
      }
      =
      \frac{
        h(v_{1} \upf{u_W})
        }{
        h(v_{2} \upf{u_W})
      }
      \\
      \label{eq:zS-formula}
      z_S
      &=
      \frac{
        v_{1'} u_S
        }{
        v_{1'} \evec{2}
      }
      \frac{
        v_{2'} \evec{2}
        }{
        v_{2'} u_S
      }
      =
      \frac{
        h(v_{1'} \upf{u_S})
        }{
        h(v_{2'} \upf{u_S})
      }
      \\
      \label{eq:zE-formula}
      z_E
      &=
      \frac{
        v_{1'} u_E
        }{
        v_{1'} \evec{2}
      }
      \frac{
        v_{2'} \evec{2}
        }{
        v_{2'} u_E
      }
      =
      \frac{
        h(v_{1'} \upf{u_E})
        }{
        h(v_{2'} \upf{u_E})
      }
    \end{align}
    in terms of the \defemph{Hopf map}
    \[
      h\left(
        \begin{bmatrix}
          v^1 & v^2
        \end{bmatrix}
      \right)
      \defeq \frac{v^1}{v^2}.
    \]
    Here the regions and segments are labeled as in \cref{fig:crossing-types} and the \(v_i\) are nonzero elements of the \(L_i\) (i.e.\@ representative eigenvectors).
  \item
    \label{thm:shape-formula:pinched}
    The crossing is pinched if and only if \(L_1 = L_2\).
    Note that this condition does not depend on the choice of shadow coloring and is invariant under gauge transformations.
      \qedhere
\end{thmenum}
\end{theorem}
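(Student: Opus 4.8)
The plan is to substitute the explicit formula \eqref{eq:chi-formula} of \cref{def:associated-octahedral} into the definition \eqref{eq:b-shapes} of the shape parameters and simplify crossing by crossing. Fix a crossing and label its four segments and four regions as in \cref{fig:crossing-types,fig:shape-parameters-diagram}. By \cref{def:associated-octahedral} every segment \(i\) bounding the crossing has \(b_i = -(v_i\evec2)/(v_i u_{\upr i})\), while \(m_i\) is determined by \(v_i g_i = m_i^{-1} v_i\), equivalently \(v_i g_i^{-1} = m_i v_i\). Three facts drive the simplification: (i) the shadow rule \(u_{\dnr k} = g_k u_{\upr k}\), which lets one pass between the four shadows \(u_N, u_E, u_S, u_W\) of the regions touching the crossing (compare \cref{fig:shadow-rule-around}); (ii) the eigenvalue relation just stated; and (iii) the decoration rule \eqref{eq:decorated-positive} (resp. \eqref{eq:decorated-negative}), which expresses the eigenlines \(L_{1'}, L_{2'}\) of the outgoing segments in terms of \(L_1, L_2\).

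With this dictionary in hand each identity of \cref{thm:shape-formula:formula} is a one-line computation. For instance, at a positive crossing \(\upr 1 = \upr{2'} = N\), so
\[
  z_N = \frac{b_{2'}}{b_1}
  = \frac{(v_{2'}\evec2)/(v_{2'}u_N)}{(v_1\evec2)/(v_1 u_N)}
  = \frac{v_1 u_N}{v_1\evec2}\cdot\frac{v_{2'}\evec2}{v_{2'}u_N},
\]
which is \eqref{eq:zN-formula}. For \(z_W\) we have \(\upr 2 = W\) and, since segment \(1\) separates \(N\) from \(W\), \(u_N = g_1^{-1} u_W\); hence \(v_1 u_N = v_1 g_1^{-1} u_W = m_1 (v_1 u_W)\), and this factor \(m_1\) cancels the \(m_1\) in the denominator of \eqref{eq:b-shapes}, giving \eqref{eq:zW-formula}. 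The remaining cases \(z_S, z_E\) at a positive crossing and all four shape parameters at a negative crossing are completely analogous: each time one rewrites the shadows attached to the outgoing regions in terms of those of the incoming regions using (i), and absorbs the leftover eigenvalue factors using (ii) and (iii). Finally the passage to the Hopf map in the second equalities of \eqref{eq:zN-formula}--\eqref{eq:zE-formula} is immediate from \(v\,\upf{u} = \begin{bmatrix} v u & v\evec2\end{bmatrix}\), so that \(h(v\,\upf u) = (v u)/(v\evec2)\).

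Part \cref{thm:shape-formula:pinched} then follows. Write the relevant shape parameter as \(z_R = h(v_a\,\upf{u_R})/h(v_b\,\upf{u_R})\), where \(a,b\) are the two segments appearing in the formula for the region \(R\). Admissibility gives \(\det(u_R,\evec2)\ne 0\), which is exactly \(\det\upf{u_R}\ne 0\), so \(\upf{u_R}\) is invertible; since also \(v_a\evec2, v_b\evec2\ne 0\), we get \(z_R = 1\) if and only if \(v_a\,\upf{u_R}\) and \(v_b\,\upf{u_R}\) are proportional, i.e.\@ (applying \(\upf{u_R}^{-1}\)) if and only if \(v_a\) and \(v_b\) span the same line, i.e.\@ \(L_a = L_b\). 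It remains to check, case by case, that \(L_a = L_b\) is always equivalent to \(L_1 = L_2\). At a positive crossing this is trivial when \(\{a,b\}=\{1,2\}\); otherwise one uses \(g_{1'}=g_1\), the \(g_1\)-invariance of \(L_1\), and \(L_{1'}=L_1\), \(L_{2'}=L_2 g_1\): for example the coincidence \(L_1 = L_{2'}\) reads \(L_1 = L_2 g_1\), hence \(L_1 = L_1 g_1^{-1} = L_2\). A negative crossing is identical using \eqref{eq:wirtinger-negative} and \eqref{eq:decorated-negative}. Since the \(L_i\) are intrinsic to the decorated representation—a type (A) gauge transformation by \(h\) replaces each \(L_i\) by \(L_i h\), preserving coincidences of lines, while a type (B) transformation and a change of shadow coloring leave the \(L_i\) alone—the condition \(L_1=L_2\) is independent of the shadow coloring and gauge invariant, which is the last assertion of the theorem.

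I expect the only delicate point to be the combinatorial dictionary underlying \cref{thm:shape-formula:formula}: for each of the two crossing signs one must record which of \(N,E,S,W\) lies above and which below each of the segments \(1,2,1',2'\), deduce the resulting relations among \(u_N,u_E,u_S,u_W\), and thread the eigenvalue factors \(m_1,m_2\) of \eqref{eq:b-shapes} through correctly; once this is pinned down the algebra is routine. (Alternatively one may run the same computation through \cref{thm:shadow-lemma} together with the identity \(\upf{u_j} = \upf{u_0}\Hol{\chi}(s_j^+)\) from the proof of \cref{thm:holonomies-match}, which gives \(g_i = \upf{u_{\upr i}}\aroundhol{\chi_i}\upf{u_{\upr i}}^{-1}\) and hence the expression for \(b_i\) again, but the same bookkeeping is unavoidable.)
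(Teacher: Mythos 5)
Your proposal is correct and follows essentially the same route as the paper: substitute the \(b\)-coordinates from \cref{def:associated-octahedral} into \cref{eq:b-shapes}, absorb the \(m\)-factors using the shadow rule \(u_{\dnr{k}} = g_k u_{\upr{k}}\) together with the eigenvalue relation, and then use invertibility of \(\upf{u_R}\) to identify \(z_R = 1\) with proportionality of the eigenvectors. The only (immaterial) divergence is in part \ref{thm:shape-formula:pinched}, where the paper reduces ``pinched'' to the single condition \(z_W = 1\) via \cref{eq:b-transf-positive,eq:b-transf-negative}, while you instead check directly that each of the four conditions is equivalent to \(L_1 = L_2\) using the decoration rules; both versions are routine and correct.
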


\begin{proof}
  \ref{thm:shape-formula:formula}
  \Cref{eq:zN-formula} follows directly from the expressions for \(b_1\) and \(b_{2'}\) in \cref{def:associated-octahedral} and
  \[
    v_1 \upf{u_N} = \begin{bmatrix}
      v_1 u_N & v_1 \evec{2}
    \end{bmatrix}
    .
  \]
  The same method gives
  \[
    z_W = 
    m_1^{-1}
    \frac{
      v_{1} u_N
      }{
      v_{1} \evec{2}
    }
    \frac{
      v_{2} \evec{2}
      }{
      v_{2} u_W
    }
    =
    m_1^{-1}
    \frac{
      h(v_{1} \upf{u_N})
      }{
      h(v_{2} \upf{u_W})
    }
    .
  \]
  To obtain \cref{eq:zW-formula}, write
  \[
    m_1^{-1} v_1 u_N
    =
    v_1 g_1 u_N
    =
    v_1 u_W
  \]
  as \(u_W = g_1 u_N\) by the definition of shadow coloring.
  \Cref{eq:zS-formula,eq:zE-formula} follow from the same argument.

  \ref{thm:shape-formula:pinched}
  One can use \cref{eq:b-transf-positive,eq:b-transf-negative} to show that \(z_W = 1\) if and only if the crossing is pinched.
  \(\upf{u_W}\) is invertible, so \(v_1 \upf{u_W}\) and \(v_2 \upf{u_W}\) are dependent iff \(v_1\) and \(v_2\) are.
\end{proof}

The following corollary is useful when defining state integrals for the quantum invariants constructed in \cite{McPhailSnyderVolume}.

\begin{corollary}
  If \(\rho : \pi(D) \to \slg\) is a non-pinched representation, then it is gauge-equivalent to a shadow coloring for which no shape parameter lies on the unit circle.
\end{corollary}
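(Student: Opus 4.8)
The plan is to combine Theorem~\ref{thm:gauge-equivalent-to-admissible} with the explicit shape-parameter formulas of Theorem~\ref{thm:shape-formula}, viewing the extra freedom in the choice of shadow coloring as an additional family of gauge transformations whose effect on the shape parameters we can control. First I would recall that by Theorem~\ref{thm:gauge-equivalent-to-admissible} we may already assume $\rho$ is realized by an admissible decorated shadow coloring $i \mapsto (g_i, L_i)$, $j \mapsto u_j$, so all shape parameters are finite and nonzero; the remaining task is to perturb within the gauge orbit so that none of the finitely many shape parameters $z_N, z_W, z_S, z_E$ (over all crossings) has modulus $1$. Since $\rho$ is non-pinched, Theorem~\ref{thm:shape-formula}\ref{thm:shape-formula:pinched} guarantees $L_1 \neq L_2$ at every crossing, so each relevant $h(v_a \upf{u_\bullet})$ appearing in \eqref{eq:zN-formula}--\eqref{eq:zE-formula} is well-defined and the ratios are genuinely nonconstant functions of the data.

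The key step is to identify a one- (or higher-) parameter family of gauge transformations under which each shape parameter moves as a nonconstant rational (in fact Möbius-type) function, and then invoke a measure/Baire-category argument. I would use type (B) transformations by a diagonal one-parameter subgroup, say $h_t = \operatorname{diag}(t, t^{-1})$: by \eqref{eq:type-B-gauge} the $b$-coordinates rescale while the $m$-coordinates are unchanged, and tracing through \eqref{eq:b-shapes} one sees each $z_\bullet$ is multiplied by a fixed power of $t^2$ (the power depending only on how the region sits relative to the crossing). More robustly, the cleanest route is to observe from \eqref{eq:zN-formula}--\eqref{eq:zE-formula} that after applying a type (B) transformation by a generic $h$ and then rescaling the shadow $u_0$ of the top region (which is a further type (B) move, permitted by Lemma~\ref{thm:shadow-colors-free}), each shape parameter becomes $z_\bullet(s) = s^{k_\bullet}\, z_\bullet$ for a nonzero integer $k_\bullet$ and a complex parameter $s \in \mathbb{C}\setminus\{0\}$, since a global scaling of all shadows multiplies each $v_i u_j$ uniformly but leaves $v_i \evec{2}$ fixed, shifting the Hopf-map ratios by controlled powers of $s$. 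For each crossing and each of the four shape parameters, the set of $s$ for which $|z_\bullet(s)| = 1$, i.e. $|s|^{k_\bullet} = |z_\bullet|^{-1}$, is a single circle $\{|s| = r_\bullet\}$ in $\mathbb{C}^\times$; there are finitely many such circles, so their complement is nonempty (indeed dense and open), and any $s$ outside all of them gives the desired coloring.

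The main obstacle I anticipate is verifying that the chosen family of gauge transformations actually acts on \emph{every} shape parameter with a \emph{nonzero} exponent $k_\bullet$ — i.e. that no shape parameter is accidentally invariant under the scaling, which would make the circle-avoidance argument vacuous for that parameter. This is where the non-pinched hypothesis and the precise combinatorics of which regions ($N, S, E, W$) are above versus below each crossing must be used: one has to check, crossing type by crossing type, that the four exponents coming from \eqref{eq:b-shapes} together with the shadow-scaling weights are all nonzero. A clean way to sidestep case analysis is to instead argue abstractly: the map from the gauge orbit (an irreducible variety, being a quotient of $\slg$ acting on the coloring data) to $(\mathbb{C}^\times)^{4\cdot\#\text{crossings}}$ recording all shape parameters has image not contained in any coordinate hyperplane (that is exactly admissibility plus Theorem~\ref{thm:gauge-equivalent-to-admissible}), and the preimage of the real-codimension-one set $\bigcup \{|z_\bullet| = 1\}$ is a proper closed subset of the orbit in the analytic topology, hence has empty interior; since the orbit is a smooth connected manifold of positive dimension, its complement is nonempty. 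One then picks any coloring in that complement. I would present the concrete diagonal-scaling version as the main argument and mention the abstract version as an alternative.
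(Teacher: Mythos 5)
There is a genuine gap in the mechanism you propose for moving the shape parameters. Your ``cleanest route'' --- rescaling all the shadows by $s$ --- does nothing at all: every shape parameter in \cref{eq:zN-formula,eq:zW-formula,eq:zS-formula,eq:zE-formula} has the form $\frac{v_i u_j}{v_k u_j}\cdot\frac{v_k \evec{2}}{v_i \evec{2}}$, in which $u_j$ appears once in the numerator and once in the denominator, so the scaling cancels and every exponent $k_\bullet$ you hoped to show is nonzero is in fact zero. (A uniform rescaling of the shadows is also not a type (B) transformation by an element of $\slg$.) Your other concrete mechanism, type (B) by $h_t=\operatorname{diag}(t,t^{-1})$, does not act by monomials either: writing $u_j=(u^1,u^2)^{\transp}$ one gets $v_i h_t^{-1}u_j = t^{-1}v_i^1u^1 + t\,v_i^2u^2$, so each shape parameter becomes a M\"obius function of $t^2$, not $t^{k}$ times a constant; the locus $|z(t)|=1$ is then not a circle $\{|t|=r\}$, and the function can even be constant in $t$ when $u_j$ is proportional to a coordinate vector. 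Your abstract fallback has the same missing ingredient in a different guise: ``proper closed subset, hence empty interior'' is false for general closed sets, and to upgrade it you need each $|z_\bullet|$ to be a \emph{nonconstant} real-analytic function on the orbit --- which is exactly the nonconstancy you never establish, and which does not follow from admissibility (that only rules out the values $0$ and $\infty$, not the value set lying on the unit circle).

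The paper's proof supplies precisely this missing nonconstancy by choosing the right family: a type (A) transformation by $A$ replaces $\evec{2}$ by $x=A\evec{2}$ in the second factor of each shape parameter, giving $z_{ik}^{j}=C_{ik}^{j}\,\frac{v_k x}{v_i x}$. Since the crossing is not pinched, $v_i$ and $v_k$ are linearly independent, so $[x]\mapsto \frac{v_k x}{v_i x}$ is a genuine M\"obius transformation of $\psp$; the locus $|z_{ik}^{j}|=1$ is therefore an honest circle on the Riemann sphere, and finitely many circles cannot cover $\psp$. If you want to salvage your approach, replace the scaling family by this type (A) family (or argue that for a general one-parameter family each $z_\bullet$ is a nonconstant rational function and its unit-modulus locus is a real-analytic curve), but as written the circle-avoidance step is vacuous.
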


\begin{proof}
  The argument is similar to the proof of \cref{thm:gauge-equivalent-to-admissible}.
  The shape parameter for a corner by segments \(i\) and \(k\) near region \(j\) is
  \[
    z_{ik}^{j} =
    \frac{
      v_i u_j
      }{
      v_k u_j
    }
    \frac{
      v_k \evec{2}
      }{
      v_i \evec{2}
    }
  \]
  By \cref{eq:type-A-gauge} after applying a type (A) gauge transformation by \(A \in \slg\) the shape parameter
  \[
    z_{ik}^{j} =
    \frac{
      v_i u_j
      }{
      v_k u_j
    }
    \frac{
      v_k A \evec{2}
      }{
      v_i A \evec{2}
    }
    =
    C_{ik}^{j}
    \frac{
      v_k x
      }{
      v_j x
    }
  \]
  where \(C_{ik}^j = v_i u_u / v_k u_j\) and \(x = A \evec{2}\).
  This is homogeneous in \(x\) so we can think of it as a function defined on \(\psp\).
  Let
  \[
    X_{i k}^j = 
    \set{ [x]  \given |C_{ik}^{j} v_j x|^2 = |v_k x|^2 } \subset \psp
  \]
  be the set of points for which this particular shape parameter lies on the unit circle.
  Our claim is that the union \(X = \bigcup X_{ik}^{j}\) is not all of \(\psp\).
  Because this is a finite union it is enough to show that each \(X_{ik}^{j}\) is a circle in the Riemann sphere.
  
  Because no crossing is pinched \(v_i\) and \(v_k\) are independent, so there is a fractional linear transformation \(B \in \slg\) with \(v_i B = \evec{1}^{\transp}\) and  \(v_k B = \evec{2}^{\transp}\).
  If we change variables to \(y = (y_0, y_1) = B x\) our equation becomes
   \[
    |C_{i,k}^{j}|^2 |y_0|^2 = |y_1|^2
  \]
  Viewed as an equation for \(h(y) = y_0/y_1 \) in the Riemann sphere this describes a circle.
  Because fractional linear transformations preserve circles we see that \(X_{ik}^{j}\) is a circle as well.
\end{proof} 

To explain the connection to the Volume Conjecture we introduce a version of the Neumann-Zagier--Yokota potential function of the diagram defined using the dilogarithm
\[
  \dil(z)
  \defeq
  \int_{0}^{z}
  \frac{
    - \log(1-t)
    }{
    t
  }
  dt
  .
\]
\(\dil\) has a branch point at \(1\), where it is continuous but not differentiable.
It can be used to compute the volumes \cite{Zagier2007} and Chern-Simons invariants \cite{Neumann2004} of hyperbolic \(3\)-manifolds.
Set
\begin{equation}
  \logdil(\zeta)
  \defeq
  \frac{
    \dil(e^{2\pi \ii \zeta})
    }{
    2\pi \ii
  }
  .
\end{equation}
Let \(D\) be a link with \(\ell\) components.
Fix a choice \(\mu = (\mu_1, \dots, \mu_\ell) \) of complex number for each component of \(\ell\) and introduce a variable \(\beta_1, \dots, \beta_n\) for each segment of \(D\).
We think of the \(\beta_i\) and \(\mu_j\) as logarithms of the \(b\) and \(m\) coordinates of some octahedral coloring.
For each crossing \(c\) (as in \cref{fig:shape-parameters-diagram}) of \(D\) set
\[
  \potl[c]{} =
  \logdil(\beta_{2'} - \beta_{1})
  -\logdil(\beta_2 - \beta_1 - \mu_1)
  +\logdil(\beta_{2} - \beta_{1'} + \mu_2 - \mu_1)
  -\logdil(\beta_{2'} - \beta_{1'} + \mu_2)
  .
\]
The arguments are logarithms of the shape parameters \eqref{eq:b-shapes}.

\begin{definition}
  The \defemph{potential function} of \(D\) with respect to \(\mu\) is
  \begin{equation}
    \label{eq:potential-function}
    \potl[D, \mu]{\beta_1, \dots, \beta_n}
    =     
    \sum_{\text{crossings } c}
    \epsilon(c) \Phi_c
  \end{equation}
  where \(\epsilon(c) = +1\) for positive crossings and \(-1\) for negative crossings.
  Because \(\dil\) has a branch point at \(1\) this function is analytic when the arguments of the functions \(\logdil\) (i.e.\@ the logarithms of the shape parameters) are not in \(2\pi \ii \mathbb{Z}\).
\end{definition}

\begin{theorem}
  \label{thm:critical-points-are-reps}
  Let \(\mathcal{C}_{D, \mu}\) be the set of generalized critical points of \(\potl[D, \mu]{}\), i.e.\@ points \(\beta = (\beta_1, \dots, \beta_n)\) where
  \[
      \frac{
        \partial \potl[D, \mu]{}
      }{
        \partial \beta_i
      }
      \equiv
      0
      \pmod{2\pi \ii}
      \text{ for }
      i = 1, \dots, n.
  \]
  Then there are numbers \(a_i\) so that
  \[
    i \mapsto (a_i, \exp(2\pi \ii \beta_i), \exp(2\pi \ii \mu_{k(i)}) )
  \]
  is an octahedral coloring of \(D\), where \(\mu_{k(i)}\) is the meridian log-parameter associated with the component \(k(i)\) of segment \(i\).
\end{theorem}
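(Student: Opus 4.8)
The plan is to read the $b$-- and $m$--coordinates straight off the critical point, manufacture the $a$--coordinates from the $b$--transformation equations, and then show that the generalized critical point equations are exactly what is needed for the $a$--transformation equations to hold; the $m$--equations will be immediate.

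First I would differentiate the dilogarithm. From $\dil'(z) = -\log(1-z)/z$ and the chain rule, $\tfrac{d}{d\zeta}\logdil(\zeta) = -\log\bigl(1 - e^{2\pi\ii\zeta}\bigr)$, so $\exp\!\bigl(\tfrac{d}{d\zeta}\logdil(\zeta)\bigr) = \bigl(1 - e^{2\pi\ii\zeta}\bigr)^{-1}$. Put $b_i = \exp(2\pi\ii\beta_i)$ and $m_i = \exp(2\pi\ii\mu_{k(i)})$. The four arguments of the dilogarithms in $\Phi_c$ are the logarithms of the shape parameters \eqref{eq:b-shapes} of the crossing $c$ computed from these $b_i, m_i$, so for each segment $i$ meeting $c$ the factor $\exp(\partial_{\beta_i}\Phi_c)$ is an explicit ratio $(1-z')^{\pm1}(1-z'')^{\mp1}$ of ``$1 - \text{shape}$'' terms, the four possibilities matching the four roles $1,2,1',2'$ a segment can play at $c$. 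Hence the critical point equation $\partial_{\beta_i}\Phi_{D,\mu}\equiv 0\pmod{2\pi\ii}$ becomes a multiplicative identity among such factors, one for each of the (at most two) crossings at the ends of $i$, weighted by the signs $\epsilon(c)$. Observe also that $\Phi_{D,\mu}$ is differentiable only where no shape parameter equals $1$, so at a generalized critical point every shape parameter of every crossing lies in $\mathbb{C}\setminus\set{0,1}$.

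Next I would produce the $a_i$. For a closed link diagram each segment $i$ is ``incoming'' (one of the labels $1,2$) at exactly one of its two endpoint crossings, say $c(i)$. The two $b$--transformation equations \eqref{eq:b-transf-positive} (or \eqref{eq:b-transf-negative}) at $c(i)$, with the $b$-- and $m$--coordinates already fixed, uniquely determine the two incoming $a$--values there: for example at a positive crossing one solves $z_N = 1 - (m_1/a_1)(1 - z_W)$ for $a_1$ and the analogous relation for $a_2$. Since the shape parameters avoid $0$ and $1$, these are well-defined and nonzero, which defines $a_i\in\mathbb{C}\setminus\set{0}$ for every segment. With this definition the $b$-- and $m$--transformation equations hold at every crossing: the $m$--equations because meridians of a common component share a $\mu$, and each $b$--equation because it was used to define one of the incoming $a$'s at that crossing.

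It then remains to verify the $a$--transformation equations \eqref{eq:a-transf-positive}, \eqref{eq:a-transf-negative}. Substituting the values of the incoming $a$'s just defined, together with the identity $z_N z_S = z_W z_E$ (immediate from \eqref{eq:b-shapes}), collapses $A$ to $\tfrac{(1-z_W)(1-z_E)}{(1-z_N)(1-z_S)}$, and likewise for $\tilde A$. With this, the equation $a_{1'} = a_1 A^{-1}$ (and $a_{2'} = a_2 A$) at a crossing $c$ becomes an identity relating only the shape parameters of $c$ to those of the crossing where the outgoing segment $1'$ (resp.\ $2'$) is incoming, and this identity is precisely the exponentiated generalized critical point equation for the corresponding $\beta$. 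The main obstacle is the bookkeeping in this last step: one must keep track of which of the labels $1,2,1',2'$ each segment carries at each of its two crossings, of how the over/under strand and the sign $\epsilon(c)$ enter, and of the cancellation of the meridian factors $\exp(2\pi\ii\mu_k)$, checking this across the positive and negative crossing cases and the possible orientations (with the obvious modification when a segment has both endpoints at a single crossing). This is elementary but tedious, in the spirit of the other case-checks in the paper; once it is done, $i\mapsto (a_i, b_i, m_i)$ satisfies every condition of \cref{def:octahedral-stuff} and is therefore an octahedral coloring.
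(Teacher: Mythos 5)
Your proposal is correct in outline and its key computations check out, but it takes a genuinely different route from the paper. The paper's proof is a two-line reduction: it cites \cite[Theorem 6.3]{McPhailSnyder2022} for the statement that non-pinched octahedral colorings are in bijection with solutions of the \emph{segment equations} (one rational equation in the \(b\)-variables per segment, with the \(a\)-variables then determined), and only verifies that \(\exp(\partial \Phi_{D,\mu}/\partial \beta_i)=1\) \emph{is} the segment equation of segment \(i\). You instead unpack and re-prove the content of that citation from scratch: you read off \(b_i=e^{2\pi\ii\beta_i}\), \(m_i=e^{2\pi\ii\mu_{k(i)}}\), note that non-differentiability of \(\logdil\) at integer arguments forces every shape parameter away from \(1\), solve the two \(b\)-equations at each crossing for the incoming \(a\)'s (e.g.\ \(a_1=m_1(1-z_W)/(1-z_N)\)), and reduce the remaining \(a\)-equations to the exponentiated critical point equations via the identity \(A=\tfrac{(1-z_W)(1-z_E)}{(1-z_N)(1-z_S)}\) (which I confirm follows from \(z_Nz_S=z_Wz_E\)); a spot check of one case, say a segment incoming as the over-strand of a positive crossing, gives \(\tfrac{(1-z_S)(1-z_N')}{(1-z_E)(1-z_W')}=1\) on both sides, so the matching does work. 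The trade-off: the paper's argument is short but opaque without the external reference, while yours is self-contained and yields explicit formulas for the \(a_i\), at the cost of a final case-by-case verification (signs, over/under roles, orientations, kinks) that you sketch rather than carry out --- which is acceptable here, since that verification is exactly the tedious-but-elementary content delegated to the citation in the paper's version.
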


\begin{proof}
  By \cite[Theorem 6.3]{McPhailSnyder2022} non-pinched colorings are in bijection with solutions of the \defemph{segment equations} of the diagram;
  these are a set of rational equations in the \(b\)-variables, with one for each segment.
  A solution of the segment equations determines an \(a\)-variable for each segment yielding an octahedral coloring, and it exists if and only if the gluing equations of the four-term decomposition have nondegenerate solutions.
  Because 
  \[
    \frac{
      d
      }{
      d\zeta
    }
    \logdil(\zeta)
    =
    -
    \log(1 - e^{2\pi \ii \zeta})
  \]
  we can work out that
  \[
    \exp
    \frac{
      \partial \potl[D, \mu]{}
      }{
      \partial \beta_i
    }
    =
    1
  \]
  is the segment equation of segment \(i\).
\end{proof}

One can easily handle pinched crossings by working directly with octahedral colorings or by using shadow \(\slg\)-colorings and \cref{thm:holonomies-match}.
However, the smoothness of \(\potl[D, \mu]{}\) is still significant for the Volume Conjecture.
Most proofs follow \textcite{Yokota2000} by showing that the large \(N\) asymptotics of the Kashaev invariant are given by a contour integral of the form
\[
  \int_{\Gamma} e^{N \potl[D, \mu]{\beta_1, \dots, \beta_n}} \,d\beta_1 \cdots d\beta_n
\]
as \(N \to \infty\).
The saddle-point method for determining the asymptotics of these integrals works best near analytic critical points of \(\potl[D,\mu]{}\).
More significantly the proof usually requires moving the contour \(\Gamma\) through the parameter space which requires \(\potl[D, \mu]{}\) to be complex-analytic.

As part of this argument it is important to know whether a representation \(\rho\) occurs as the holonomy of an octahedral coloring from an \emph{analytic} critical point.
Typically we only care about \(\rho\) up to conjugacy, which motivates the following definition:

\begin{definition}
  Let \(\rho : \pi(L) \to \slg\) be a representation of a link group and \(D\) a diagram of \(L\).
  We say that \(\rho\) is \defemph{\(D\)-smooth} if it is conjugate to the holonomy of a non-pinched octahedral coloring \(\chi\) of \(D\), i.e.\@ to the holonomy of a generalized critical point.
\end{definition}

\begin{corollary}
  \label{thm:detection-condition}
  Let \(D\) be any link diagram and \(\rho : \pi(D) \to \slg\) a decorated representation.
  Then \(\rho\) is \(D\)-smooth if and only if for each crossing of \(D\) the lines \(L_1\) and \(L_2\) are distinct.
\end{corollary}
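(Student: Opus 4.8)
The plan is to read the corollary off from three earlier results: \cref{thm:gauge-equivalent-to-admissible}, which puts $\rho$ in the image of the holonomy map up to conjugacy; \cref{thm:holonomies-match}, which says the associated octahedral coloring of an admissible shadow coloring has holonomy the given representation; and the pinched criterion of \cref{thm:shape-formula:pinched}, that a crossing is pinched exactly when $L_1=L_2$. The governing point is that \emph{non-pinched} literally means \emph{$L_1\ne L_2$ at every crossing}, a condition invariant under conjugation, so whether $\rho$ is the holonomy of a non-pinched coloring is detected purely at the level of the decorated representation.

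\textbf{Sufficiency.} Suppose $L_1\ne L_2$ at every crossing of $D$. By \cref{thm:gauge-equivalent-to-admissible} there is $h\in\slg$ so that $h^{-1}\rho h$, with its induced decoration (the line $L_i h$ at segment $i$), underlies an admissible decorated shadow coloring $(D,g,u)$, with $u_0$ the shadow of the top region. Let $\chi$ be its associated octahedral coloring (\cref{def:associated-octahedral}). Since $L_1 h\ne L_2 h$ at each crossing, \cref{thm:shape-formula:pinched} shows $\chi$ is non-pinched, while \cref{thm:holonomies-match} gives $\hol{\chi,u_0}=h^{-1}\rho h$. Thus $\rho$ is conjugate to the holonomy of the non-pinched coloring $\chi$, i.e.\@ $\rho$ is $D$-smooth.

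\textbf{Necessity.} Conversely, suppose $\rho$ is $D$-smooth, so a conjugate $\rho'$ of it equals the holonomy $\hol{\chi,\evec{1}}=\Hol{\chi}\mathcal{F}$ of a non-pinched octahedral coloring $\chi$ (any other shadow normalization merely conjugates further). Because \emph{$L_1\ne L_2$ at every crossing} is conjugation-invariant, it is enough to verify it for $\rho'$. Fix a crossing $c$ labeled as in \cref{fig:crossing-types}. The region $\upr{2}$ above segment $2$ is the region $\dnr{1}$ below segment $1$ (for both crossing signs), so the over-paths obey $s^+_{\upr{2}}=s^+_{\upr{1}}x_1^+$ and hence $\Hol{\chi}(s^+_{\upr{2}})=\Hol{\chi}(s^+_{\upr{1}})\upf{\chi_1}$. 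By the discussion after \cref{thm:functor} we have $\rho'(w_i)=\Hol{\chi}(s^+_{\upr{i}})\,\aroundhol{\chi_i}\,\Hol{\chi}(s^+_{\upr{i}})^{-1}$, and by the eigenvector computation in the proof of \cref{thm:shadow-lemma} the row vector $\begin{bmatrix}-1 & b_i\end{bmatrix}$ spans the $m_i^{-1}$-eigenspace of $\aroundhol{\chi_i}$; hence the decoration line $L_i$ of $\rho'$ at segment $i$ is the span of $\begin{bmatrix}-1 & b_i\end{bmatrix}\Hol{\chi}(s^+_{\upr{i}})^{-1}$. Cancelling the common factor $\Hol{\chi}(s^+_{\upr{1}})^{-1}$ and using the identity $\begin{bmatrix}-1 & b_1\end{bmatrix}\upf{\chi_1}=\begin{bmatrix}-1/m_1 & b_1\end{bmatrix}$, we get that $L_1=L_2$ at $c$ if and only if $\begin{bmatrix}-1/m_1 & b_1\end{bmatrix}$ and $\begin{bmatrix}-1 & b_2\end{bmatrix}$ are proportional, i.e.\@ $b_2=m_1 b_1$, i.e.\@ the shape parameter $z_W=b_2/(m_1 b_1)$ equals $1$. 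Since $\chi$ is non-pinched, $z_W\ne 1$ at $c$; as $c$ was arbitrary, $\rho'$ and hence $\rho$ has distinct decoration lines at every crossing.

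\textbf{Main obstacle.} The sufficiency half is just a chaining of the three cited theorems. The real content is the converse: connecting the \emph{abstract} non-pinched octahedral coloring supplied by $D$-smoothness to the \emph{concrete} decoration of $\rho$. One route is to feed $\hol{\chi,\evec{1}}$ back through \cref{thm:gauge-equivalent-to-admissible} and then apply \cref{thm:shape-formula:pinched}, but this forces one to track how the octahedral coordinates transform under the gauge actions \eqref{eq:type-A-gauge} and \eqref{eq:type-B-gauge} and to check that \emph{pinched} is preserved. The short eigenvector computation above avoids this; in effect it extends \cref{thm:shape-formula:pinched} from associated colorings of admissible shadow colorings to the holonomy of an arbitrary octahedral coloring. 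I expect the fiddliest point to be verifying the region-adjacency claim $\upr{2}=\dnr{1}$ and the over-path identity cleanly for both crossing signs, though this is purely combinatorial.
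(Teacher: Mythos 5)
Your proof is correct, and the sufficiency half is exactly the paper's intended argument: the paper's entire proof is the one-line citation ``\cref{thm:shape-formula}\ref{thm:shape-formula:pinched}'', which you unwind as \cref{thm:gauge-equivalent-to-admissible} (to reach an admissible shadow coloring up to conjugacy) followed by \cref{thm:holonomies-match} and the pinched criterion. Where you genuinely diverge is the necessity direction. The paper implicitly applies \cref{thm:shape-formula}\ref{thm:shape-formula:pinched} there too, even though that theorem is stated only for octahedral colorings \emph{associated to} admissible decorated shadow colorings, not for an arbitrary non-pinched coloring handed to you by the definition of \(D\)-smoothness; bridging that gap via the gauge formulas \eqref{eq:type-A-gauge}--\eqref{eq:type-B-gauge} is exactly the bookkeeping you identify as avoidable. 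Your replacement — computing the decoration of \(\Hol{\chi}\mathcal{F}\) directly from the eigenrow \(\begin{bmatrix}-1 & b_i\end{bmatrix}\) of \(\aroundhol{\chi_i}\), using \(s^+_{\upr{2}}=s^+_{\upr{1}}x_1^+\) and the identity \(\begin{bmatrix}-1 & b_1\end{bmatrix}\upf{\chi_1}=\begin{bmatrix}-1/m_1 & b_1\end{bmatrix}\) to reduce \(L_1=L_2\) to \(z_W=1\) — is a clean, self-contained re-derivation of the relevant case of \cref{thm:shape-formula:pinched} that makes the corollary independent of the admissibility hypothesis. What the paper's route buys is brevity; what yours buys is that the ``only if'' direction actually closes without a hidden appeal to realizing every non-pinched coloring by an admissible shadow coloring. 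The only caveat, present equally in the paper, is that the statement tacitly assumes the decoration of \(\rho\) is the one induced by the octahedral coloring (via the \(m_i^{-1}\)-eigenlines), which matters only when some meridian image is diagonalizable and the two possible decorations differ.
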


\begin{proof}
  \cref{thm:shape-formula}\ref{thm:shape-formula:pinched}.
\end{proof}

In other words, \(\rho\) corresponds to a pinched octahedral coloring if and only if at some crossing the images of the Wirtinger generators have the same distinguished fixed point when acting on \(\psp\) by fractional linear transformations.
If we restrict our attention to hyperbolic knots then we can give a purely group-theoretic characterization.
Recall that the holonomy of the complete hyperbolic structure is a faithful, discrete representation \(\rho : \pi(K) \to \pslg\).
It is boundary-parabolic (the meridians have eigenvalues \(\pm 1\)) and always lifts to \(\slg\).
Because its meridians are parabolic it has a unique decoration.

\begin{definition}
  A tangle diagram \(D\) is \defemph{arc-faithful} if at each crossing the Wirtinger generators of the over and under arcs are distinct elements of \(\pi(T)\).%
  \note{%
    Since \(w_1 = w_2\) if and only if \(w_1 = w_1^{-1} w_2 w_1\) it does not matter which under arc is used.
  }
\end{definition}

\begin{bigtheorem}
  \label{thm:arc-faithful}
  Let \(D\) be a diagram of a hyperbolic knot.
  The holonomy \(\rho\) of its complete hyperbolic structure is \(D\)-smooth if and only if \(D\) is arc-faithful.
\end{bigtheorem}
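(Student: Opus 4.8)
The plan is to reduce the statement to a fact about discrete subgroups of $\pslg$ together with the malnormality of peripheral subgroups in knot groups. By \cref{thm:detection-condition}, $\rho$ is $D$-smooth precisely when $L_1 \neq L_2$ at every crossing, so it is enough to show that at a fixed crossing, with over-arc generator $w_1$ and under-arc generator $w_2$ (in either order), one has $L_1 = L_2$ if and only if $w_1 = w_2$ in $\pi(K)$. The implication $w_1 = w_2 \Rightarrow L_1 = L_2$ is trivial, since then $\rho(w_1) = \rho(w_2)$; the content is the converse.

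First I would translate the hypothesis $L_1 = L_2$ geometrically. Because $\rho$ is the holonomy of the complete structure it is discrete and faithful, and because the knot is hyperbolic its meridians are parabolic, so each $\rho(w_i)$ is a nontrivial parabolic and $L_i \subset \mathbb{C}^2$ is its unique invariant line, equivalently the unique fixed point $\xi_i \in \psp = \partial_\infty \mathbb{H}^3$ of $\rho(w_i)$. Hence $L_1 = L_2$ means that $\rho(w_1)$ and $\rho(w_2)$ are parabolics sharing a fixed point $\xi$. In a torsion-free discrete subgroup of $\pslg$ the stabilizer of $\xi$ is a discrete subgroup of the abelian group of Euclidean motions fixing $\xi$, hence isomorphic to $1$, $\mathbb{Z}$, or $\mathbb{Z}^2$; and for a one-cusped finite-volume hyperbolic manifold a maximal parabolic subgroup containing a nontrivial parabolic is a conjugate $g P g^{-1}$ of the peripheral subgroup $P \cong \mathbb{Z}^2$. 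Using faithfulness of $\rho$, this says that $w_1, w_2 \in g P g^{-1}$ inside $\pi(K)$.

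It then remains to show that the only meridian of a knot group contained in a conjugate $g P g^{-1}$ of the peripheral subgroup is $g \mu g^{-1}$ itself, where $\mu$ is a fixed meridian; applied to both $w_1$ and $w_2$ (which are meridians, and conjugate to $\mu$ since $K$ is a knot) this gives $w_1 = g\mu g^{-1} = w_2$, so $D$ is not arc-faithful, completing the argument. To prove this I would use that peripheral subgroups of knot groups are malnormal: if $w = h\mu h^{-1} \in g P g^{-1}$ is a meridian then $w$ is a nontrivial element of $g P g^{-1} \cap h P h^{-1}$, so malnormality forces $g P g^{-1} = h P h^{-1}$, whence $g^{-1}h$ normalizes $P$ and, since the normalizer of a maximal $\mathbb{Z}^2$ parabolic subgroup in a torsion-free discrete group is the subgroup itself, lies in $P$; as $P$ is abelian this gives $w = g\mu g^{-1}$. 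I expect this malnormality input to be the crux: for a hyperbolic knot it follows from discreteness, since a nontrivial element of $g P g^{-1} \cap h P h^{-1}$ is parabolic and fixes the cusp point of each factor, but a parabolic has a unique fixed point, so the two maximal parabolic subgroups have the same fixed point and therefore coincide. (This is the group-theoretic ingredient from the MathOverflow discussion acknowledged above; alternatively one may cite the known malnormality of boundary subgroups of knot groups.) Assembling the three steps yields the chain of equivalences $D$ arc-faithful $\iff$ no crossing has $w_1 = w_2$ $\iff$ no crossing has $L_1 = L_2$ $\iff$ $\rho$ is $D$-smooth.
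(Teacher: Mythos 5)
Your proof is correct, and it shares the paper's reduction: by \cref{thm:detection-condition} everything comes down to showing that if the images of two Wirtinger meridians \(w_1, w_2\) under the discrete faithful \(\rho\) share their (unique, parabolic) fixed point, then \(w_1 = w_2\). The hard direction, however, is argued along a genuinely different route. The paper notes that parabolics with a common fixed point commute, pulls this back to \(\pi(K)\) by faithfulness, and then applies the classification of abelian subgroups of \(3\)-manifold groups (Hempel) plus a homology/primitivity argument. You instead stay inside the geometry of the cusp: both meridians lie in the stabilizer of the common parabolic fixed point, which is a conjugate \(gPg^{-1}\) of the peripheral \(\mathbb{Z}^2\), and malnormality of \(P\) (derived from discreteness via uniqueness of parabolic fixed points) together with \(N(P)=P\) forces every meridian contained in \(gPg^{-1}\) to equal \(g\mu g^{-1}\). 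Your version is slightly longer but self-contained modulo standard discreteness facts, and it isolates a clean auxiliary statement (a conjugate of the peripheral subgroup contains exactly one meridian) that handles the point the paper compresses most: two distinct homologous elements of a peripheral \(\mathbb{Z}^2\), such as \(\mu\) and \(\mu\lambda\), can still generate it, so one genuinely needs that both \(w_i\) are conjugates of \(\mu\), which is exactly where your malnormality step does its work. The only imprecision worth flagging is that the stabilizer of \(\xi\) in \(\pslg\) is the full Borel subgroup rather than a group of Euclidean motions; you should first rule out loxodromics fixing \(\xi\) (standard for a discrete group already containing a parabolic fixing \(\xi\)) before identifying the stabilizer with a discrete subgroup of \((\mathbb{C},+)\).
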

\begin{proof}
  The only if is obvious: if two adjacent arcs have the same Wirtinger generators then their images always share a fixed point.

  Conversely, suppose \(\rho\) corresponds to a pinched octahedral coloring of \(D\) so there are Wirtinger generators \(w_1\) and \(w_2\) for which \(\rho(w_1)\) and \(\rho(w_2)\) have the same fixed point.
  As \(\rho(w_1)\) and \(\rho(w_2)\) are parabolic this implies they commute, hence \(w_1\) and \(w_2\) commute as \(\rho\) is faithful.
  By the classification of finitely generated abelian subgroups of \(3\)-manifold groups \cite[Theorem 9.13]{Hempel1976} two commuting infinite-order elements generate a subgroup isomorphic to \(\mathbb{Z} \) or \(\mathbb{Z} \times \mathbb{Z} \).
  As \(w_1\) and \(w_2\) map to the same element of the first homology they must generate a \(\mathbb{Z}\) subgroup.
  They are primitive and map to the same homology class so they must be equal.
\end{proof}

The existence of analytic critical points of \(\potl[D, \mu]{}\) has previously been studied, and we can express our results in this context.
An edge of an ideal triangulation of a knot complement is (homtopically) \defemph{peripherial} if it is homotopic to a curve in the boundary of a regular neighborhood of the knot.

\begin{corollary}
  \label{thm:non-peripheral}
  Let \(D\) be a diagram of a hyperbolic knot.
  \begin{thmenum}
  \item
    \label{thm:non-peripheral:equiv}
    The edges of the four-term octahedral decomposition associated to \(D\) are homotopically non-peripheral if and only if \(D\) is arc-faithful.
  \item
    \label{thm:non-peripheral:alternating}
    If \(D\) is reduced and alternating then it is arc-faithful.
    \qedhere
  \end{thmenum}
\end{corollary}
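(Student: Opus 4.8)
The plan is to deduce both parts from \cref{thm:arc-faithful} by translating between the combinatorics of the four-term decomposition and the holonomy $\rho$ of the complete hyperbolic structure. Throughout, let $\rho$ be this holonomy for the given hyperbolic knot; it is faithful, discrete and boundary-parabolic, and by \cref{thm:gauge-equivalent-to-admissible} it underlies, after a gauge transformation, an admissible decorated shadow coloring of $D$ with an associated octahedral coloring $\chi$ whose shape parameters are given by \eqref{eq:b-shapes}.

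For \ref{thm:non-peripheral:equiv} I would argue through the potential function. An edge of the four-term decomposition is homotopically peripheral exactly when, in the complete structure, its two ideal endpoints fall together at the cusp; such a collapse degenerates a tetrahedron, and since \cref{thm:gauge-equivalent-to-admissible} lets us avoid the shapes $0$ and $\infty$, it forces a shape parameter \eqref{eq:b-shapes} to equal $1$, i.e.\ a pinched crossing---and conversely a shape-$1$ crossing makes the corresponding tetrahedron degenerate with a peripheral collapsed edge. Equivalently, by \cref{thm:critical-points-are-reps} and the definition of the potential function, ``no edge peripheral'' is the statement that the complete structure sits at an \emph{analytic} critical point of $\potl[D,\mu]{}$, which is exactly the characterization of \textcite{Sakuma2018}, whose edge bookkeeping I would cite (or re-derive by writing the homotopy class of each edge of the octahedral decomposition in terms of Wirtinger generators). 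Now \cref{thm:shape-formula}\ref{thm:shape-formula:pinched} says a crossing is pinched iff $L_1 = L_2$ there, and since $\rho$ is parabolic the $L_i$ are the fixed points of $\rho(w_1),\rho(w_2)$ on $\psp$; two parabolics in a Kleinian group with a common fixed point commute, so by the argument in the proof of \cref{thm:arc-faithful} (faithfulness, the classification of abelian subgroups of $3$-manifold groups, primitivity of meridians) this is equivalent to $w_1 = w_2$ in $\pi(D)$. Chaining these equivalences over all crossings yields \ref{thm:non-peripheral:equiv}. The step most in need of care is the combinatorial one---which edges of the octahedral decomposition survive the face identifications, and that a collapsed edge produces a shape equal to $1$ rather than $0$ or $\infty$ at the correct crossing---and this is precisely where I would lean on \textcite{Sakuma2018}.

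For \ref{thm:non-peripheral:alternating} the quickest route is: by the results recalled in the introduction (\textcite{Sakuma2018,Garoufalidis2016}) the reduced alternating diagram of an alternating hyperbolic knot has a potential function with an analytic critical point at the complete structure, so by \ref{thm:non-peripheral:equiv} (equivalently \cref{thm:arc-faithful}) $D$ is arc-faithful. A more self-contained argument would use that both checkerboard surfaces of a reduced alternating diagram are essential (classical, by Aumann and Menasco--Thistlethwaite): if $w_1 = w_2$ at some crossing then the null-homotopic loop $w_1 w_2^{-1}$ is represented by a curve running along the checkerboard surface through that crossing, which must then be inessential on the surface, forcing a nugatory crossing and contradicting reducedness of $D$; here ``alternating'' supplies essentiality of the surfaces and ``reduced'' rules out nugatory crossings. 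I expect the obstacle in this second approach to be pinning $w_1 w_2^{-1}$ down as an explicit essential curve on the checkerboard surface and extracting the nugatory crossing from its inessentiality.
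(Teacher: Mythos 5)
Your part~\ref{thm:non-peripheral:alternating} (first route) is exactly the paper's proof: quote \textcite{Garoufalidis2016} and \textcite{Sakuma2018} for non-peripherality of the edges for reduced alternating diagrams and feed it into part~\ref{thm:non-peripheral:equiv}. The checkerboard-surface alternative is not needed and you rightly flag that you cannot complete it, so I will set it aside.

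Part~\ref{thm:non-peripheral:equiv} is where you diverge, and where there is a real gap. The paper's argument is short: arc-faithfulness is equivalent (by \cref{thm:arc-faithful} and the definition of \(D\)-smoothness) to the existence of a non-pinched octahedral coloring whose holonomy is the complete structure, i.e.\ to a nondegenerate solution of the gluing equations realizing \(\rho\); the equivalence of \emph{that} with non-peripherality of the edges is then quoted from \textcite{Dunfield2012} --- Lemma~3.5 for ``nondegenerate solution \(\Rightarrow\) edges non-peripheral'' and the paragraph before Remark~3.4 for the converse. You instead try to prove this equivalence directly, by matching peripheral edges with shape-\(1\) degenerations. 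Your forward direction (a peripheral edge forces two developed ideal vertices to coincide, and since an octahedral coloring by definition excludes shapes \(0\) and \(\infty\), the degeneration must be a pinched crossing) is plausible in outline. But the direction you actually need for ``arc-faithful \(\Rightarrow\) edges non-peripheral'' is the cheap one, and the direction ``edges non-peripheral \(\Rightarrow\) the complete structure occurs as a non-pinched coloring'' is the hard one: it requires knowing that the shape parameters of the coloring associated to \(\rho\) really are cross-ratios of developed positions of the ideal vertices, together with the combinatorial identification of which edges of the four-term decomposition carry which shapes. Your sketch asserts this correspondence rather than proving it, and the source you propose to lean on for the bookkeeping, \textcite{Sakuma2018}, is cited in the paper only for the alternating case; the general statement is precisely what the \textcite{Dunfield2012} citations supply. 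Your reduction of ``pinched'' to ``\(w_1 = w_2\)'' via \cref{thm:shape-formula}\ref{thm:shape-formula:pinched} and the abelian-subgroup argument is correct but is already the content of \cref{thm:arc-faithful}; the missing ingredient is the bridge between gluing-equation nondegeneracy and peripherality, which you should either quote from \textcite{Dunfield2012} or prove, not defer.
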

\begin{proof}
  \ref{thm:non-peripheral:equiv}
  If \(D\) is arc-faithful then there is a non-pinched octahedral coloring \(\chi\) with holonomy the complete hyperbolic structure.
  From \(\chi\) we can obtain a nondegenerate solution of the gluing equations of the octahedral decomposition, and by \cite[Lemma 3.5]{Dunfield2012} the existence of such a solution implies the edges of the octahedral decomposition are homotopically non-peripheral.
  
  Conversely, suppose the edges of the octahedral decomposition are homotopically non-peripheral.
  Then by the paragraph before \cite[Remark 3.4]{Dunfield2012} the complete hyperbolic structure occurs as a non-degenerate solution of the gluing equations, hence as a non-pinched octahedral coloring.

  \ref{thm:non-peripheral:alternating}
  \textcite{Garoufalidis2016} and \textcite{Sakuma2018} independently showed that the edges of the four-term decomposition of a reduced alternating diagram are non-peripheral.
  Part \ref{thm:non-peripheral:equiv} implies such a diagram is arc-faithful.
\end{proof}

It is easy to find diagrams that are not arc-faithful.
However, by avoiding these obvious bad configurations it also seems easy to find arc-faithful diagrams.

\begin{example}
  Any diagram with a kink
  \begin{center}
\begingroup%
  \makeatletter%
  \providecommand\color[2][]{%
    \errmessage{(Inkscape) Color is used for the text in Inkscape, but the package 'color.sty' is not loaded}%
    \renewcommand\color[2][]{}%
  }%
  \providecommand\transparent[1]{%
    \errmessage{(Inkscape) Transparency is used (non-zero) for the text in Inkscape, but the package 'transparent.sty' is not loaded}%
    \renewcommand\transparent[1]{}%
  }%
  \providecommand\rotatebox[2]{#2}%
  \newcommand*\fsize{\dimexpr\f@size pt\relax}%
  \newcommand*\lineheight[1]{\fontsize{\fsize}{#1\fsize}\selectfont}%
  \ifx\svgwidth\undefined%
    \setlength{\unitlength}{55.43417931bp}%
    \ifx\svgscale\undefined%
      \relax%
    \else%
      \setlength{\unitlength}{\unitlength * \real{\svgscale}}%
    \fi%
  \else%
    \setlength{\unitlength}{\svgwidth}%
  \fi%
  \global\let\svgwidth\undefined%
  \global\let\svgscale\undefined%
  \makeatother%
  \begin{picture}(1,0.81664445)%
    \lineheight{1}%
    \setlength\tabcolsep{0pt}%
    \put(0,0){\includegraphics[width=\unitlength,page=1]{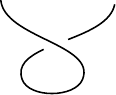}}%
  \end{picture}%
\endgroup%

  \end{center}
  is never-arc faithful.
  More generally any piece of a diagram that looks like
  \begin{center}
\begingroup%
  \makeatletter%
  \providecommand\color[2][]{%
    \errmessage{(Inkscape) Color is used for the text in Inkscape, but the package 'color.sty' is not loaded}%
    \renewcommand\color[2][]{}%
  }%
  \providecommand\transparent[1]{%
    \errmessage{(Inkscape) Transparency is used (non-zero) for the text in Inkscape, but the package 'transparent.sty' is not loaded}%
    \renewcommand\transparent[1]{}%
  }%
  \providecommand\rotatebox[2]{#2}%
  \newcommand*\fsize{\dimexpr\f@size pt\relax}%
  \newcommand*\lineheight[1]{\fontsize{\fsize}{#1\fsize}\selectfont}%
  \ifx\svgwidth\undefined%
    \setlength{\unitlength}{55.43913174bp}%
    \ifx\svgscale\undefined%
      \relax%
    \else%
      \setlength{\unitlength}{\unitlength * \real{\svgscale}}%
    \fi%
  \else%
    \setlength{\unitlength}{\svgwidth}%
  \fi%
  \global\let\svgwidth\undefined%
  \global\let\svgscale\undefined%
  \makeatother%
  \begin{picture}(1,0.8165715)%
    \lineheight{1}%
    \setlength\tabcolsep{0pt}%
    \put(0,0){\includegraphics[width=\unitlength,page=1]{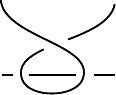}}%
  \end{picture}%
\endgroup%

  \end{center}
  will prevent arc-faithfulness.
\end{example}

According to \citeauthor{DiagramSite}'s database \cite{DiagramSite} of boundary-parabolic representations every hyperbolic knot with at most \(12\) crossings has an arc-faithful diagram.
We conjecture this holds for all hyperbolic knots, and as discussed above this seems relevant to a general proof of the Volume Conjecture.

\begin{conjecture}
  Every hyperbolic knot has an arc-faithful diagram.
\end{conjecture}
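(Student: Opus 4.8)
The plan is to work entirely with the group-theoretic reformulation supplied by \cref{thm:arc-faithful} and \cref{thm:non-peripheral}: for a hyperbolic knot $K$, a diagram $D$ fails to be arc-faithful precisely when some crossing is \emph{bad}, meaning its over-arc $A$ and under-arc $B$ satisfy $w_A = w_B$ in $\pi(K)$, equivalently when the corresponding edge of the four-term octahedral decomposition is homotopically peripheral. So it suffices to produce one diagram with no bad crossing. I would argue by contradiction from an extremal diagram, choosing $D$ so that the pair $(c(D), b(D))$ is lexicographically minimal, where $c(D)$ is the number of crossings and $b(D)$ the number of bad crossings; if $b(D) = 0$ we are done, so suppose $b(D) > 0$ and fix a bad crossing $c$.

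The core step is to convert the algebraic coincidence $w_A = w_B$ into a topological simplification. Following the proof of \cref{thm:arc-faithful}, $w_A$ and $w_B$ are \emph{equal} primitive meridians, not merely conjugate, so the loop running along $A$, around $c$, and back along $B$ lies in a peripheral subgroup of $\pi(K)$. I would try to upgrade this to an embedded surface realizing the coincidence --- a disk or annulus in $S^3 \setminus K$ cobounded by meridians of $A$ and $B$ --- using an annulus/disk-theorem argument in the spirit of Jaco--Shalen and Johannson, and then invoke hyperbolicity: because the complement of a hyperbolic knot is atoroidal and contains no essential annuli, any such surface must be inessential, forcing the bad crossing to be a \emph{local} defect. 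Concretely I expect this to show that, after an ambient isotopy of $K$ that does not increase $c(D)$, the crossing $c$ is a kink or a clasp-type configuration of the kind in the examples after \cref{thm:non-peripheral}, which can then be removed by a Reidemeister move, strictly decreasing $(c(D), b(D))$ and contradicting minimality.

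The step I expect to be the main obstacle is controlling the effect of such a repair on the rest of the diagram: even granting that every individual bad crossing can be undone by an isotopy supported near it, there is no a priori reason the isotopy cannot create new bad crossings elsewhere, so one must either exhibit a complexity that provably strictly decreases or arrange the repairs to be mutually disjoint and simultaneous. A second genuine difficulty is the passage from ``$w_A = w_B$ in $\pi(K)$'' to an embedded surface: any two meridians of a knot are conjugate, so the usual singular-annulus arguments see only conjugacy and give nothing, and one must exploit the full strength of the equality, perhaps through the peripheral structure, the longitude, or a lift to the infinite cyclic cover.

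As a warm-up and source of intuition I would first try to prove the conjecture for a broad, easily realizable class of diagrams extending the reduced alternating case of \cref{thm:non-peripheral:alternating} --- for instance reduced, prime, twist-reduced diagrams --- by showing directly that bad crossings cannot occur in such a diagram and that every hyperbolic knot admits one. The computational evidence quoted in the paper, that every hyperbolic knot of at most twelve crossings has an arc-faithful diagram, strongly suggests that bad crossings are removable defects rather than essential features; making this precise uniformly in the crossing number is exactly where the difficulty lies.
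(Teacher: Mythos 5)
The statement you are proving is stated in the paper only as a conjecture: the paper offers no proof, and the only evidence it cites is computational (all hyperbolic knots with at most $12$ crossings have an arc-faithful diagram) together with the special case of reduced alternating diagrams in \cref{thm:non-peripheral}\ref{thm:non-peripheral:alternating}. So there is no argument in the paper to compare yours against; what you have written is a research plan, and by your own account it has two unfilled gaps, both of which I think are genuine and the first of which is more serious than you suggest. The step from ``$w_A = w_B$ in $\pi(K)$'' to an embedded surface starts from nothing: the element $w_A w_B^{-1}$ is literally trivial, so there is no essential singular annulus or disk to feed into Jaco--Shalen--Johannson, and hyperbolicity (atoroidality, no essential annuli) rules out essential surfaces but says nothing about a nullhomotopic loop. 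The actual content of $w_A = w_B$ is that some path in the complement joining the two arcs conjugates one meridian to the other \emph{trivially}, i.e.\ the two arcs cobound an immersed band in the complement whose core is that path; turning this into a diagrammatic statement (``the crossing is a local defect removable by an isotopy'') is exactly the conjecture restated, and the paper's own second example (the over-strand configuration that is not a kink) already shows the defect need not be removable by a single crossing-reducing Reidemeister move. Nothing in the group theory forces the two arcs with equal Wirtinger generators to be diagrammatically close to one another.

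The second gap --- that repairing one bad crossing may create others, so the lexicographic complexity $(c(D), b(D))$ is not obviously monotone under your moves --- you identify correctly, but note that it cannot be patched independently of the first: without knowing \emph{what} move removes a bad crossing you cannot bound its effect on the rest of the diagram. Your proposed warm-up also does not quite work as stated: reduced alternating diagrams exist only for alternating knots, and for a broader class such as ``reduced, prime, twist-reduced'' you would need to prove both that every hyperbolic knot admits such a diagram \emph{and} that such diagrams have no bad crossings; the second claim is not known and is essentially the conjecture again. In short, your reformulation of the problem (via \cref{thm:arc-faithful} and \cref{thm:non-peripheral}, bad crossing $\Leftrightarrow$ peripheral edge) is faithful to the paper, but the proposal does not constitute a proof, and the central missing idea --- a way to extract diagrammatic or topological consequences from the \emph{equality}, rather than conjugacy, of two meridians --- is the same obstruction that presumably leaves the conjecture open.
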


\printbibliography[heading=bibintoc,title={Bibliography}]

\end{document}